\documentclass[12pt]{amsart}
\usepackage{amsmath}
\usepackage{amsxtra}
\usepackage{amscd}
\usepackage{amsthm}
\usepackage{amsfonts}
\usepackage{amssymb}
\usepackage{eucal}
\usepackage{verbatim}
\usepackage[all]{xy}
\usepackage{color}
\definecolor{deepjunglegreen}{rgb}{0.0, 0.29, 0.29}
\definecolor{darkspringgreen}{rgb}{0.09, 0.45, 0.27}
\definecolor{Red}{rgb}{0.7, 0,0}

\makeatletter
\usepackage{etex,etoolbox,needspace}
\pretocmd\section{\Needspace*{4\baselineskip}}{}{}
\makeatletter

\usepackage[pdftex,bookmarks=false,colorlinks=true,citecolor=darkspringgreen,debug=true,
  naturalnames=true,pdfnewwindow=true]{hyperref}

\newtheorem{thm}{Theorem}[subsection]

\newtheorem{cor}[thm]{Corollary}

\newtheorem{lem}[thm]{Lemma}

\newtheorem{conj}[thm]{Conjecture}

\theoremstyle{definition}

\theoremstyle{remark}
\newtheorem{rem}[thm]{Remark}
\theoremstyle{remark}
\newtheorem{ex}[thm]{Example}

\newcommand{\nc}{\newcommand}
\nc{\renc}{\renewcommand} \nc{\ssec}{\subsection}
\nc{\sssec}{\subsubsection}

\nc{\on}{\operatorname} \nc{\wh}{\widehat}
\nc\ol{\overline} \nc\ul{\underline} \nc\wt{\widetilde}

\newcommand{\red}[1]{{\color{Red}#1}}

\nc{\BA}{{\mathbb{A}}} \nc{\BC}{{\mathbb{C}}} \nc{\BF}{{\mathbb{F}}}
\nc{\BD}{{\mathbb{D}}} \nc{\BG}{{\mathbb{G}}} \nc{\BQ}{{\mathbb{Q}}}
\nc{\BM}{{\mathbb{M}}} \nc{\BN}{{\mathbb{N}}} \nc{\BO}{{\mathbb{O}}}
\nc{\BP}{{\mathbb{P}}} \nc{\BR}{{\mathbb{R}}}
\nc{\BZ}{{\mathbb{Z}}} \nc{\BS}{{\mathbb{S}}} \nc{\BW}{{\mathbb{W}}}

\nc{\CA}{{\mathcal{A}}} \nc{\CB}{{\mathcal{B}}} \nc{\CalC}{{\mathcal{C}}} \nc{\CalD}{{\mathcal{D}}}
\nc{\CE}{{\mathcal{E}}} \nc{\CF}{{\mathcal{F}}}
\nc{\CG}{{\mathcal{G}}} \nc{\CH}{{\mathcal{H}}}
\nc{\CI}{{\mathcal{I}}} \nc{\CK}{{\mathcal{K}}} \nc{\CL}{{\mathcal{L}}}
\nc{\CM}{{\mathcal{M}}} \nc{\CN}{{\mathcal{N}}}
\nc{\CO}{{\mathcal{O}}} \nc{\CP}{{\mathcal{P}}}
\nc{\CQ}{{\mathcal{Q}}} \nc{\CR}{{\mathcal{R}}}
\nc{\CS}{{\mathcal{S}}} \nc{\CT}{{\mathcal{T}}}
\nc{\CU}{{\mathcal{U}}} \nc{\CV}{{\mathcal{V}}}  \nc{\CX}{{\mathcal X}}
\nc{\CY}{{\mathcal Y}} \nc{\CW}{{\mathcal{W}}} \nc{\CZ}{{\mathcal{Z}}}

\nc{\cM}{{\check{\mathcal M}}{}} \nc{\csM}{{\check{\mathcal A}}{}}
\nc{\oM}{{\overset{\circ}{\mathcal M}}{}}
\nc{\obM}{{\overset{\circ}{\mathbf M}}{}}
\nc{\oCA}{{\overset{\circ}{\mathcal A}}{}}
\nc{\obA}{{\overset{\circ}{\mathbf A}}{}}
\nc{\ooM}{{\overset{\circ}{M}}{}}
\nc{\osM}{{\overset{\circ}{\mathsf M}}{}}
\nc{\vM}{{\overset{\bullet}{\mathcal M}}{}}
\nc{\nM}{{\underset{\bullet}{\mathcal M}}{}}
\nc{\oD}{{\overset{\circ}{\mathcal D}}{}}
\nc{\obD}{{\overset{\circ}{\mathbf D}}{}}
\nc{\oA}{{\overset{\circ}{\mathbb A}}{}}
\nc{\op}{{\overset{\bullet}{\mathbf p}}{}}
\nc{\cp}{{\overset{\circ}{\mathbf p}}{}}
\nc{\oU}{{\overset{\bullet}{\mathcal U}}{}}
\nc{\ofZ}{{\overset{\circ}{\mathfrak Z}}{}}

\nc{\fa}{{\mathfrak{a}}} \nc{\fb}{{\mathfrak{b}}}
\nc{\fd}{{\mathfrak{d}}} \nc{\fe}{{\mathfrak{e}}} \nc{\ff}{{\mathfrak{f}}}
\nc{\fg}{{\mathfrak{g}}} \nc{\fgl}{{\mathfrak{gl}}}
\nc{\fh}{{\mathfrak{h}}} \nc{\fri}{{\mathfrak{i}}}
\nc{\fj}{{\mathfrak{j}}} \nc{\fk}{{\mathfrak{k}}} \nc{\fl}{{\mathfrak{l}}}
\nc{\fm}{{\mathfrak{m}}} \nc{\fn}{{\mathfrak{n}}}
\nc{\ft}{{\mathfrak{t}}} \nc{\fu}{{\mathfrak{u}}} \nc{\fv}{{\mathfrak{v}}}
\nc{\fw}{{\mathfrak{w}}} \nc{\fz}{{\mathfrak{z}}}
\nc{\fp}{{\mathfrak{p}}} \nc{\fq}{{\mathfrak{q}}} \nc{\frr}{{\mathfrak{r}}}
\nc{\fs}{{\mathfrak{s}}} \nc{\fsl}{{\mathfrak{sl}}}
\nc{\fso}{{\mathfrak{so}}} \nc{\fsp}{{\mathfrak{sp}}} \nc{\osp}{{\mathfrak{osp}}}
\nc{\hsl}{{\widehat{\mathfrak{sl}}}}
\nc{\hgl}{{\widehat{\mathfrak{gl}}}}
\nc{\hg}{{\widehat{\mathfrak{g}}}}
\nc{\chg}{{\widehat{\mathfrak{g}}}{}^\vee}
\nc{\hn}{{\widehat{\mathfrak{n}}}}
\nc{\chn}{{\widehat{\mathfrak{n}}}{}^\vee}

\nc{\fA}{{\mathfrak{A}}} \nc{\fB}{{\mathfrak{B}}} \nc{\fC}{{\mathfrak{C}}}
\nc{\fD}{{\mathfrak{D}}} \nc{\fE}{{\mathfrak{E}}}
\nc{\fF}{{\mathfrak{F}}} \nc{\fG}{{\mathfrak{G}}} \nc{\fH}{{\mathfrak{H}}}
\nc{\fI}{{\mathfrak{I}}} \nc{\fJ}{{\mathfrak{J}}}
\nc{\fK}{{\mathfrak{K}}} \nc{\fL}{{\mathfrak{L}}}
\nc{\fM}{{\mathfrak{M}}} \nc{\fN}{{\mathfrak{N}}}
\nc{\frP}{{\mathfrak{P}}} \nc{\fQ}{{\mathfrak{Q}}}
\nc{\fS}{{\mathfrak{S}}} \nc{\fT}{{\mathfrak{T}}} \nc{\fU}{{\mathfrak{U}}}
\nc{\fV}{{\mathfrak{V}}} \nc{\fW}{{\mathfrak{W}}}
\nc{\fX}{{\mathfrak{X}}} \nc{\fY}{{\mathfrak{Y}}}
\nc{\fZ}{{\mathfrak{Z}}}

\nc{\ba}{{\mathbf{a}}}
\nc{\bb}{{\mathbf{b}}} \nc{\bc}{{\mathbf{c}}} \nc{\be}{{\mathbf{e}}}
\nc{\bg}{{\mathbf{g}}} \nc{\bj}{{\mathbf{j}}} \nc{\bm}{{\mathbf{m}}}
\nc{\bn}{{\mathbf{n}}} \nc{\bp}{{\mathbf{p}}}
\nc{\bq}{{\mathbf{q}}} \nc{\br}{{\mathbf{r}}} \nc{\bt}{{\mathbf{t}}}
\nc{\bfu}{{\mathbf{u}}} \nc{\bv}{{\mathbf{v}}}
\nc{\bx}{{\mathbf{x}}} \nc{\by}{{\mathbf{y}}} \nc{\bz}{{\mathbf{z}}}
\nc{\bw}{{\mathbf{w}}} \nc{\bA}{{\mathbf{A}}}
\nc{\bB}{{\mathbf{B}}} \nc{\bC}{{\mathbf{C}}}
\nc{\bD}{{\mathbf{D}}} \nc{\bF}{{\mathbf{F}}} \nc{\bG}{{\mathbf{G}}}
\nc{\bH}{{\mathbf{H}}} \nc{\bI}{{\mathbf{I}}} \nc{\bJ}{{\mathbf{J}}}
\nc{\bK}{{\mathbf{K}}} \nc{\bL}{{\mathbf{L}}} \nc{\bM}{{\mathbf{M}}}
\nc{\bN}{{\mathbf{N}}}
\nc{\bO}{{\mathbf{O}}} \nc{\bS}{{\mathbf{S}}} \nc{\bT}{{\mathbf{T}}}
\nc{\bU}{{\mathbf{U}}} \nc{\bV}{{\mathbf{V}}} \nc{\bW}{{\mathbf{W}}}
\nc{\bX}{{\mathbf{X}}}
\nc{\bY}{{\mathbf{Y}}} \nc{\bP}{{\mathbf{P}}}
\nc{\bZ}{{\mathbf{Z}}} \nc{\bh}{{\mathbf{h}}}

\nc{\sA}{{\mathsf{A}}} \nc{\sB}{{\mathsf{B}}}
\nc{\sC}{{\mathsf{C}}} \nc{\sD}{{\mathsf{D}}}
\nc{\sE}{{\mathsf{E}}} \nc{\sF}{{\mathsf{F}}} \nc{\sG}{{\mathsf{G}}} \nc{\sH}{{\mathsf{H}}}
\nc{\sI}{{\mathsf{I}}} \nc{\sK}{{\mathsf{K}}} \nc{\sL}{{\mathsf{L}}}
\nc{\sfm}{{\mathsf{m}}} \nc{\sM}{{\mathsf{M}}} \nc{\sN}{{\mathsf{N}}}
\nc{\sO}{{\mathsf{O}}} \nc{\sQ}{{\mathsf{Q}}} \nc{\sP}{{\mathsf{P}}}
\nc{\sT}{{\mathsf{T}}} \nc{\sZ}{{\mathsf{Z}}}
\nc{\sV}{{\mathsf{V}}} \nc{\sW}{{\mathsf{W}}}
\nc{\sfp}{{\mathsf{p}}} \nc{\sq}{{\mathsf{q}}} \nc{\sr}{{\mathsf{r}}}
\nc{\sfs}{{\mathsf{s}}} \nc{\st}{{\mathsf{t}}} \nc{\sfb}{{\mathsf{b}}}
\nc{\sfc}{{\mathsf{c}}} \nc{\sd}{{\mathsf{d}}}
\nc{\sz}{{\mathsf{z}}}

\nc{\tA}{{\widetilde{\mathbf{A}}}}
\nc{\tB}{{\widetilde{\mathcal{B}}}}
\nc{\tg}{{\widetilde{\mathfrak{g}}}} \nc{\tG}{{\widetilde{G}}}
\nc{\TM}{{\widetilde{\mathbb{M}}}{}}
\nc{\tO}{{\widetilde{\mathsf{O}}}{}}
\nc{\tU}{{\widetilde{\mathfrak{U}}}{}} \nc{\TZ}{{\tilde{Z}}}
\nc{\tx}{{\tilde{x}}} \nc{\tbv}{{\tilde{\bv}}}
\nc{\tfP}{{\widetilde{\mathfrak{P}}}{}} \nc{\tz}{{\tilde{\zeta}}}
\nc{\tmu}{{\tilde{\mu}}}

\nc{\urho}{\underline{\rho}} \nc{\uB}{\underline{B}}
\nc{\uC}{{\underline{\mathbb{C}}}} \nc{\ui}{\underline{i}}
\nc{\uj}{\underline{j}} \nc{\ofP}{{\overline{\mathfrak{P}}}}
\nc{\oB}{{\overline{\mathcal{B}}}}
\nc{\og}{{\overline{\mathfrak{g}}}} \nc{\oI}{{\overline{I}}}

\nc{\eps}{\varepsilon} \nc{\hrho}{{\hat{\rho}}} \nc{\balpha}{{\boldsymbol{\alpha}}}
\nc{\blambda}{{\boldsymbol{\lambda}}} \nc{\bmu}{{\boldsymbol{\mu}}} \nc{\bnu}{{\boldsymbol{\nu}}}
\nc{\btheta}{{\boldsymbol{\theta}}} \nc{\bzeta}{{\boldsymbol{\zeta}}} \nc{\bta}{{\boldsymbol{\eta}}}
\nc{\bbeta}{{\boldsymbol{\beta}}} \nc{\bkappa}{{\boldsymbol{\kappa}}} \nc{\bomega}{{\boldsymbol{\omega}}}

\nc{\one}{{\mathbf{1}}} \nc{\two}{{\mathbf{t}}}

\nc\DMO\DeclareMathOperator
\DMO\Sym{Sym}
\nc{\Tot}{{\mathop{\operatorname{\rm Tot}}}}
\nc{\Spec}{\mathop{\operatorname{\rm Spec}}}
\nc{\Ker}{{\mathop{\operatorname{\rm Ker}}}}
\nc{\Isom}{{\mathop{\operatorname{\rm Isom}}}}
\nc{\Hilb}{{\mathop{\operatorname{\rm Hilb}}}}
\nc{\deeq}{{\mathop{\operatorname{\rm deeq}}}}
\nc{\End}{{\mathop{\operatorname{\rm End}}}}
\nc{\Ext}{{\mathop{\operatorname{\rm Ext}}}}
\nc{\Hom}{{\mathop{\operatorname{\rm Hom}}}}
\nc{\CHom}{{\mathop{\operatorname{{\mathcal{H}}\it om}}}}
\nc{\GL}{{\mathop{\operatorname{\rm GL}}}}
\nc{\PGL}{{\mathop{\operatorname{\rm PGL}}}}
\nc{\SL}{{\mathop{\operatorname{\rm SL}}}}
\nc{\SO}{{\mathop{\operatorname{\rm SO}}}}
\nc{\Sp}{{\mathop{\operatorname{\rm Sp}}}}
\nc{\OSp}{{\mathop{\operatorname{\rm SOSp}}}}
\nc{\gr}{{\mathop{\operatorname{\rm gr}}}}
\nc{\Id}{{\mathop{\operatorname{\rm Id}}}}
\nc{\perf}{{\mathop{\operatorname{\rm perf}}}}
\nc{\defi}{{\mathop{\operatorname{\rm def}}}}
\nc{\length}{{\mathop{\operatorname{\rm length}}}}
\nc{\supp}{{\mathop{\operatorname{\rm supp}}}}
\nc{\HC}{{\mathcal H}{\mathcal C}}
\nc{\Vect}{{\mathop{\operatorname{\rm Vect}}}}

\nc{\pr}{{\operatorname{pr}}}
\nc{\Cliff}{{\mathsf{Cliff}}}
\nc{\loc}{{\operatorname{loc}}} \nc{\lc}{{\operatorname{lc}}}
\nc{\Fl}{{\mathbf{Fl}}} \nc{\Ffl}{{\mathcal{F}\ell}}
\nc{\Fib}{{\mathsf{Fib}}}
\nc{\Coh}{{\mathsf{Coh}}} \nc{\FCoh}{{\mathsf{FCoh}}}
\nc{\Perf}{{\mathsf{Perf}}}
\nc{\wtimes}{\mathbin{\widetilde\times}}

\nc{\reg}{{\text{\rm reg}}} \nc{\ren}{{\text{\rm ren}}}
\nc{\self}{{\text{\rm self}}}
\nc{\gvee}{{\mathfrak g}^{\!\scriptscriptstyle\vee}}
\nc{\tvee}{{\mathfrak t}^{\!\scriptscriptstyle\vee}}
\nc{\nvee}{{\mathfrak n}^{\!\scriptscriptstyle\vee}}
\nc{\bvee}{{\mathfrak b}^{\!\scriptscriptstyle\vee}}
       \nc{\rhovee}{\rho^{\!\scriptscriptstyle\vee}}

\nc{\cplus}{{\mathbf{C}_+}} \nc{\cminus}{{\mathbf{C}_-}}
\nc{\cthree}{{\mathbf{C}_*}} \nc{\Qbar}{{\bar{Q}}}

\newcommand\iso{\mathbin{\vphantom{j^{X^2}}\smash{\overset{\sim}{\vphantom{\rule{0pt}{0.20em}}\smash{\longrightarrow}}}}}
\nc{\Gtimes}{\vphantom{j^{X^2}}\smash{\overset{G}{\vphantom{\rule{0pt}{0.30em}}\smash{\times}}}}
\nc{\sGtimes}{\vphantom{j^{X^2}}\smash{\overset{\mathsf G}{\vphantom{\rule{0pt}{0.30em}}\smash{\times}}}}

\nc{\bOmega}{{\overline{\Omega}}}

\nc{\seq}[1]{\stackrel{#1}{\sim}}

\nc{\aff}{{\operatorname{aff}}}
\nc{\fin}{{\operatorname{fin}}}
\nc{\mir}{{\operatorname{mir}}}
\nc{\triv}{{\operatorname{triv}}}
\nc{\ext}{{\operatorname{ext}}}
\nc{\righ}{{\operatorname{right}}}
\nc{\lef}{{\operatorname{left}}}
\nc{\forg}{{\operatorname{forg}}}
\nc{\fid}{{\operatorname{fd}}}
\nc{\odd}{{\operatorname{odd}}}
\nc{\even}{{\operatorname{even}}}
\nc{\modu}{{\operatorname{-mod}}}
\nc{\Gr}{{\operatorname{Gr}}}
\nc{\FT}{{\operatorname{FT}}}
\nc{\Mat}{{\operatorname{Mat}}}
\nc{\MSt}{{\operatorname{MSt}}}
\nc{\sph}{{\operatorname{sph}}}
\nc{\GR}{{\mathbf{Gr}}}
\nc{\Perv}{{\operatorname{Perv}}}
\nc{\Rep}{{\operatorname{Rep}}}
\nc{\Ind}{{\operatorname{Ind}}}
\nc{\IC}{{\operatorname{IC}}}
\nc{\Bun}{{\operatorname{Bun}}}
\nc{\Proj}{{\operatorname{Proj}}}
\nc{\Stab}{{\operatorname{Stab}}}
\nc{\pt}{{\operatorname{pt}}}
\nc{\bfmu}{{\boldsymbol{\lambda}}}
\nc{\bfomega}{{\boldsymbol{\omega}}}
\nc{\calM}{\mathcal M}
\nc{\calA}{\mathcal A}
\nc{\calO}{\mathcal O}
\nc{\CC}{\mathcal C}
\nc{\calN}{\mathcal N}
\nc{\grg}{\mathfrak g}
\nc{\dslash}{/\!\!/}
\nc{\tslash}{/\!\!/\!\!/}
\nc\grt{\mathfrak t}
\nc\bfM{\mathbf M}
\nc\bfN{\mathbf N}
\nc\Sig{\Sigma}
\nc\ZZ{\mathbb{Z}}
\nc\calC{\mathcal C}
\nc\calF{\mathcal F}
\nc\calX{\mathcal X}
\nc\calY{\mathcal Y}
\nc\QCoh{\operatorname{QCoh}}
\nc\IndCoh{\operatorname{IndCoh}}
\nc\Maps{\operatorname{Maps}}
\nc\Dmod{D-\operatorname{mod}}
\newcommand\Hecke{\operatorname{Hecke}}
\nc{\calD}{\mathcal D}
\nc\bfO{\mathbf O}

\nc\GG{\mathbb G}
\nc\calK{\mathcal K}
\nc{\calG}{\mathcal G}
\nc\RHom{\operatorname{RHom}}
\nc\Res{\operatorname{Res}}
\nc\Av{\operatorname{Av}}
\nc{\RH}{{\operatorname{RH}}}
\nc{\RT}{{\operatorname{RT}}}
\nc{\DR}{{\operatorname{DR}}}
\nc{\Ome}{\Omega}
\nc\Lam{\Lambda}

\nc\grs{\mathfrak s}
\nc{\tilX}{\widetilde X}
\nc\calB{\mathcal B}
\nc\calS{\mathcal S}
\nc\calT{\mathcal T}
\nc\calZ{\mathcal Z}
\nc\LS{\operatorname{LocSys}}
\nc\bfL{\on{\mathbf L}}

\newcommand*\circled[1]
{*{#1}}
\makeatletter
\newcommand{\raisemath}[1]{\mathpalette{\raisem@th{#1}}}
\newcommand{\raisem@th}[3]{\raisebox{#1}{$#2#3$}}
\nc{\binlim}[2][]{\def\@tempa{#1}\@ifnextchar^{\@binlim{#2}}{\@binlim{#2}^{}}}
\def\@binlim#1^#2{\mathbin{\@ifempty{#2}{\mathop{#1}}{\mathop{#1}\@xp\displaylimits\@tempa^{#2}}}}

\makeatother

\nc\cX{{\mathcal X}}

\nc\Gm{{\mathbb G_m}}
\nc{\isoto}
    {\stackrel{\displaystyle\raisemath{-.2ex}{\sim}}\to}
\nc\cD\CalD
\nc\setm\setminus
\nc\cE\CE
\renc\Hecke{\mathit{\CH\kern-.2ex ecke}}
\nc\bsl\backslash
\nc\Fq{\mathbb F_q}
\nc\x\times
\nc\bGO{{\bG_\bO}}
\nc\bla\blambda
\nc\blt\bullet
\nc\opp{{\on{op}}}
\nc\srel\stackrel
\nc\oast\circledast
\nc\tbx{\binlim{\widetilde\boxtimes{}}}
\nc\into\hookrightarrow
\nc\otto\leftrightarrow
\renc\setminus\smallsetminus
\nc\phitau{\varphi\tau}

\usepackage{tikz}
\usepackage{adjustbox} 

\newcommand{\dashedslash}{%
  \adjustbox{valign=m}{%
    \begin{tikzpicture}[scale=0.30, baseline]
      \draw[dashed, dash pattern=on 1pt off 1pt, line width=0.3pt] (0,0) -- (0.5,1);
    \end{tikzpicture}%
  }%
}

\newcommand{\dashedbackslash}{%
  \adjustbox{valign=m}{%
    \begin{tikzpicture}[scale=0.30, baseline]
      \draw[dashed, dash pattern=on 1pt off 1pt, line width=0.3pt] (0.5,0) -- (0,1);
    \end{tikzpicture}%
  }%
}

\newenvironment{i-ii-iii}{%
\begin{enumerate}
}%
{\end{enumerate}}
\nc\ceil[1]{\lceil#1\rceil}  \nc\floor[1]{\lfloor#1\rfloor}
\nc\Lie{\on{Lie}}
\nc\sS{{\mathsf S}}
\nc\vvv{\ensuremath{\red\surd}}
\nc\la\lambda
\def\arxiv#1{\href{http://arxiv.org/abs/#1}{\tt arXiv:#1}} 
\nc\kap{\kappa}
\nc\gra{\mathfrak a}
\nc\gl{\mathfrak{gl}}
\nc\sTr{\operatorname{sTr}}
\nc\hatG{\widehat{G}}
\nc\calL{\mathcal L}
\nc\Whit{\operatorname{Whit}}
\nc\KL{\operatorname{KL}}

\makeatletter
\renewcommand{\subsection}{\@startsection{subsection}{2}{0pt}{-3ex
plus -1ex minus -0.2ex}{-2mm plus -0pt minus
    -2pt}{\normalfont\bfseries}} \makeatother

\nc{\svee}{{\!\scriptscriptstyle\vee}}

\numberwithin{equation}{subsection}
\allowdisplaybreaks
\nc\mto{\mapsto }
\nc\en{\enspace }
\nc\DD{\mathbb{D}}
\nc\Ztwo{\mathbb Z/2 }
\newcommand\hatR{\widehat{R}}
\nc\grb{\mathfrak b}
\nc\tilD{\widetilde D}
\renewcommand\mod{{\operatorname{-mod}}}
\newcommand\mon{{\operatorname{mon}}}
\nc\alp{\alpha}

\begin{document}

\author[A.Braverman]{Alexander Braverman}
\address{Department of Mathematics, University of Toronto and Perimeter Institute
of Theoretical Physics, Waterloo, Ontario, Canada, N2L 2Y5}
\email{braval@math.toronto.edu}

\author[M.Finkelberg]{Michael Finkelberg}
\address{Einstein Institute of Mathematics, The Hebrew University of Jerusalem,
  Edmond J. Safra Campus, Giv’at Ram, Jerusalem, 91904, Israel;
\newline  National Research University Higher School of Economics}
\email{fnklberg@gmail.com}


\author[R.Travkin]{Roman Travkin}
\address{Skolkovo Institute of Science and Technology, Moscow, Russia}
\email{roman.travkin2012@gmail.com}

\title
{Relative Langlands duality and Koszul duality}




\begin{abstract}
  Consider a pair of $S$-dual hyperspherical varieties $G\circlearrowright X$ and $G^\vee\circlearrowright X^\vee$ equipped with
  equivariant quantizations $Q(X)$, $Q(X^\vee)$. Assume that the local conjecture of Ben-Zvi, Sakellaridis and Venkatesh holds
  for this pair, and also that $X\simeq T^*_\psi(Y)$ is polarized, so that $Q(X)=D_\psi(Y)$. Let $B\subset G$
  (resp.\ $B^\vee\subset G^\vee$) be Borel subgroups. Then
  using a variant of the $S^1$-equivariant localization of~\cite{bzn}, 
  we deduce an equivalence between the $\BZ/2$-graded $B$-equivariant
  category $(D_\psi(Y)\mod^B)^{\BZ/2}$ and the $\BZ/2$-graded unipotent $B^\vee$-monodromic category
  $(Q(X^\vee)\mod^{B^\vee,\operatorname{mon}})^{\BZ/2}$.  
\end{abstract}

\maketitle

\tableofcontents

\section{Introduction}
\subsection{Relative Langlands duality and equivalence of spherical categories}
Let $G$ be a connected reductive group over $\BC$ and let  $\fY$ be a smooth affine spherical $G$-variety -- i.e.\ a smooth affine $G$-variety with an open dense $B$-orbit, where $B$ is a Borel subgroup of $G$. We shall make an additional assumption: the stabilizer of a general point of $\fY$ in $B$ is connected.

\begin{rem}\label{hyper-spherical}
In fact the setup of \cite{bzsv} is more general. In {\em loc.cit.} the authors start with a {\em hyper-spherical}  affine Poisson $G$-variety $\fX$ such that the stabilizer of a general point in $G$ is connected. If one starts with a spherical variety $\fY$ as above then one can attach to it a hyper-spherical variety $\fX=T^*\fY$. Moreover, it is shown in \cite{bzsv} that the stabilizer of a general point of $\fX$ in $G$ is connected if and only if the stabilizer of a general point of $\fY$ in $B$ is connected. This language makes what follows much more symmetric, but the main results of this paper will be concerned with the cotangent case, i.e.\ the case when $\fX=T^*\fY$ as above, so we shall restrict ourselves to this setup from the very beginning.
\end{rem}

One of the main conjectures of \cite{bzsv} is that there should exist a dual hyper-spherical affine Poisson $G^{\vee}$-variety $\fX^{\vee}$ endowed with the following structures:

1) A $\mathbb C^\times$-action so that the Poisson structure has degree 2.

2) A graded quantization $Q_\hbar(\fX^{\vee})$ which we are going to consider as a dg algebra with 0 differential.

\noindent
Moreover, it is expected that the following holds. First for an algebraic group $A$ acting on a scheme $Z$ we denote by $D(Z)^A$ the
derived category of $A$-equivariant $D$-modules on $Z$ and by $\tilD(Z)^A$ the corresponding renormalized  equivariant derived
category -- i.e.\ the cocompletion of the category consisting of objects of the usual equivariant derived category which have
coherent cohomology  in finitely many degrees (i.e.\ the category of locally compact objects in $D(Z/A)$);
the difference between the renormalized equivariant derived category and the usual
equivariant derived category is discussed e.g.\ in \cite{dg}). Typically we are going to deal with scheme $Z$ where $A$ has finitely
many orbits (or and ind-scheme which is an a colimit of such schemes), so in that case we can work with the corresponding derived
category of equivariant constructible sheaves.
\begin{conj}\label{bzsv}
  The category $\tilD(\fY(\CK))^{G(\CO)\rtimes \mathbb C^\times}$ is equivalent to $(Q_{\hbar}(\fX^{\vee})\mod)^{G^{\vee}}$ (the derived category
  of dg-modules over the corresponding dg-algebra endowed with a compatible action of $G^{\vee}$). Here $\mathbb C^\times$ on the left is
  the ``loop rotation", and the corresponding equivariant parameter corresponds to $\hbar$ on the right.
\end{conj}
In fact, the main Conjecture of \cite{bzsv} is more general: instead of starting with a spherical variety $\fY$ one can start with a
hyperspherical $G$-variety $\fX$ (in the above situation one can take $\fX=T^*\fY$). This makes the story more symmetric, but also more
complicated. We shall refer to the situation when $\fX=T^*\fY$ as the cotangent case. In this paper we shall be mostly concerned with
that situation or with the so called twisted cotangent case -- cf.~\S\ref{twisted}.

\subsection{Koszul duality conjecture}
In addition to Conjecture \ref{bzsv} the following conjecture was formulated in \cite{fgt}.
First let $R$ be a (non-commutative) ring endowed with a strong action of an algebraic group $B$; by definition this means
that $B$ acts on $R$ and there is a homomorphism of Lie algebras $\iota\colon  \grb\to R$ so that the derivative of the
$B$-action is equal to the adjoint action of $\iota$. Then  one can talk about the category of monodromic $R$-modules with
unipotent monodromy.
By definition this is the category of modules $M$ over $R$ endowed with a compatible action of $B$ so that difference between the
derivative of this action and the action of $\grb$ on $M$ coming from $\iota$ is equal to the derivative of some fixed character
$\lambda\colon B\to {\mathbb G}_m$ (it is easy to see that this definition does not depend on the choice of $\lambda$).
\begin{conj}\label{koszul}
  Let $B$ denote a Borel subroup of $G$, and let $B^{\vee}$ be a Borel subgroup of $G^{\vee}$. Let $(Q_{\hbar=1}(\fX^{\vee})\mod)^{B^{\vee}_\mon}$
  denote the derived category of $B^{\vee}$-monodromic modules over
  the (non-graded) algebra $Q_{\hbar=1}(\fX^{\vee})$ with unipotent monodromy. Then the categories $D(\fY)^B$ and
  $(Q_{\hbar=1}(\fX^{\vee})\mod)^{B^{\vee}_\mon}$ are Koszul dual. Similarly, the category $D(\fY)^{B_\mon}$ is Koszul dual to
  $(Q_{\hbar=1}(\fX^{\vee})\mod)^{B^{\vee}}$.
\end{conj}

Note that Koszul duality in particular means that the two categories above should be endowed with some additional gradings (and the duality is essentially an equivalence of bigraded categories which mixes the gradings in some particular way).
To avoid this issue one can pass to the world of $\BZ/2$-graded categories. For a $\ZZ$-graded category $D$ we shall denote by $D^{\BZ/2}$ the corresponding $\BZ/2$-graded category. Then we get the following version of Conjecture \ref{koszul}:
\begin{conj}\label{conjZ2}
The $B$-equivariant category $(D(\fY)^B)^{\BZ/2}$ is equivalent to the $B^\vee$-unipotent monodromic category
$(Q_{\hbar=1}(\fX^{\vee})\mod^{B^{\vee}_\mon})^{\BZ/2}$.\footnote{Completions of $\BZ/2$-graded categories are briefly discussed
in~\S\ref{completion}.} Similarly, the category $(D(\fY)^{B_\mon})^{\BZ/2}$ is equivalent to
$(Q_{\hbar=1}(\fX^{\vee})\mod^{B^{\vee}})^{\BZ/2}$.
\end{conj}

\subsection{What is done in this paper}The purpose of this paper is to develop a general framework for proving Conjecture \ref{conjZ2}.
This general formalism will show that Conjecture \ref{conjZ2} follows from Conjecture \ref{bzsv}. 


\noindent
Let us explain this in more detail. What we are actually going to prove is this:
\begin{thm}\label{main}
\begin{enumerate}
\item
Assume the validity of Conjecture \ref{bzsv} for given $\fY$ and $\fX^{\vee}$. Then
the category $(D(\fY)^B)^{\BZ/2}$ is equivalent to $(Q_{\hbar=1}(\fX^{\vee})\mod^{B^{\vee}_\mon})^{\BZ/2}$.
\item
  Similar results hold when $(D(\fY)^B)^{\BZ/2}$  is replaced by $(D(\fY)^{B_\mon})^{\BZ/2}$, and
  $(Q_{\hbar=1}(\fX^{\vee})\mod^{B^{\vee}_\mon})^{\BZ/2}$ is replaced by $(Q_{\hbar=1}(\fX^{\vee})\mod^{B^{\vee}})^{\BZ/2}$.
\end{enumerate}
\end{thm}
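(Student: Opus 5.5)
Starting from the equivalence of Conjecture~\ref{bzsv}, I will pass on both sides to module categories over the spherical Hecke category $\tilD(\mathrm{Gr}_G)^{G(\CO)\rtimes\BC^\times}\simeq (U_\hbar(\fg^\vee)\mod)^{G^\vee}$ (derived Satake with loop rotation, due to Bezrukavnikov--Finkelberg), and then undo the loop-rotation localization in the manner of Ben-Zvi--Nadler. Concretely, the proof has three steps.

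\textbf{Step 1: replace $G(\CO)$ by the Iwahori $I$.} Since $\tilD(\fY(\CK))^{G(\CO)\rtimes\BC^\times}$ is a module over the spherical Hecke category, I form $\tilD(\fY(\CK))^{I\rtimes\BC^\times}$. Under the Arkhipov--Bezrukavnikov type description, the Iwahori side corresponds on the spectral side to replacing $G^\vee$-equivariance by $B^\vee$-equivariance, i.e.\ to $(Q_\hbar(\fX^\vee)\mod)^{B^\vee}$ or its monodromic variant. Tensoring the equivalence of Conjecture~\ref{bzsv} over the spherical Hecke category with the bimodule $\tilD(G(\CK)/I)^{G(\CO)\rtimes\BC^\times}$, which corresponds to $(U_\hbar(\fg^\vee)\mod)^{B^\vee}$ (unipotent monodromic), then gives
\[\tilD(\fY(\CK))^{I\rtimes\BC^\times}\simeq (Q_\hbar(\fX^\vee)\mod)^{B^\vee_\mon}.\]
To justify the tensor product identification, I would use that $(Q_\hbar\mod)^{B^\vee}$ is obtained from $(Q_\hbar\mod)^{G^\vee}$ by tensoring with $\Rep(B^\vee)$ over $\Rep(G^\vee)$, i.e.\ $\QCoh$ of $G^\vee/B^\vee$ descent.

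\textbf{Step 2: specialize.} On the spectral side, setting $\hbar=1$ means inverting $\hbar$ and taking the $\BC^\times$-weight-zero part. This turns the graded category into $(Q_{\hbar=1}(\fX^\vee)\mod^{B^\vee_\mon})^{\BZ/2}$, since the grading collapses modulo $2$ once $\hbar$ (of degree $2$) is invertible.

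On the automorphic side, inverting the loop-rotation equivariant parameter is $S^1$-localization. Following~\cite{bzn}, the localized $\BC^\times$-equivariant category of $\fY(\CK)/I$ should be the $\BZ/2$-periodized category of the fixed locus. The $\BC^\times$-fixed points of $\fY(\CK)$ are $\fY$, the constant loops, or more precisely a disjoint union indexed by cocharacter twists. Modulo $I$, via Iwasawa, these reduce to $\fY/B$, so the result is $(D(\fY)^B)^{\BZ/2}$.

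\textbf{Step 3: the main obstacle.} The hard part is making the localization step rigorous on the non-finite-type ind-scheme $\fY(\CK)$ with renormalized categories, and checking compatibility with the completions of~\S\ref{completion}. I would try to present $\tilD(\fY(\CK))^{I\rtimes\BC^\times}$ as generated by compact objects supported on finite unions of $I$-orbits (only countably many orbits occur for spherical $\fY$). I would then prove localization orbit by orbit: on each orbit the stabilizer is a pro-unipotent extension of a torus-type group, and the Atiyah--Bott localization applies to equivariant cohomology $H^*_{T\times\BC^\times}$. The contributions of orbits with no $\BC^\times$-fixed points vanish after inverting $\hbar$. Finally, a gluing argument along the stratification identifies the surviving piece with $D(\fY)^B$.

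Part (2) follows by the same argument with monodromic and equivariant swapped: use $\tilD(\fY(\CK))^{I_\mon}$ on the automorphic side and the $B^\vee$-equivariant version of Step 1 on the spectral side.
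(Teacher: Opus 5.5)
\textbf{There is a genuine gap.} You localize with respect to loop rotation alone, by inverting $\hbar$. The argument instead requires completing at a \emph{regular} point $(\lambda,1)$ of the Lie algebra of the full torus $\sT\times\BC^\times$, and this is the paper's key idea.

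\textbf{Steps 2--3 fail as stated.} You claim that orbits without loop-rotation-fixed points die after inverting $\hbar$; this is false. Take $\fY=G$ with $G\times G$ acting, and $\mu\neq 0$.
\begin{itemize}
\item The orbit $I t^\mu I$ contains no constant loop.
\item However, the stabilizer of $t^\mu$ contains the one-parameter subgroup $c\mapsto(\mu(c)^{-1},c)$, a torus element composed with loop rotation.
\item Hence the equivariant cohomology of this orbit is supported on a subspace not contained in $\{\hbar=0\}$, and it survives inverting $\hbar$.
\end{itemize}
In general, after inverting $\hbar$ and setting $\hbar=1$, the category is spread over all of $\ft\times\{1\}$. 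The relevant fixed locus also jumps with the point: for integral $\lambda$ it is $t^{-\lambda}\fY$, and for non-integral $\lambda$ it is a copy of $\fY^{\sT_\lambda}$ (cf.\ Theorem~\ref{main'}). So nothing identifies the $\hbar$-localized category with $(D(\fY)^B)^{\BZ/2}$, which is supported at a single point.

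What is needed instead is the following sequence.
\begin{itemize}
\item Complete at $(\lambda,1)$ (for part (2), take the fiber there).
\item Apply Theorem~\ref{loc} together with \S\ref{variant}; the latter uses regularity of $\lambda$ to trade $G(\CO)$-equivariance for $\sT$-equivariance.
\item Set $\hbar=1$ and shift by $\lambda$.
\end{itemize}
Even then, this only yields a full subcategory of $D(t^{-\lambda}\fY)^{\sT\times\BC^\times}$. Upgrading it to all of $D(\fY)^{B\rtimes\BC^\times}$ needs two further inputs:
\begin{itemize}
\item Lemma~\ref{B-orbit}: every $G(\CO)$-orbit meets $t^{-\lambda}\fY$ in a disjoint union of $B$-orbits, because the centralizer of the twisted loop rotation in $G(\CO)$ is a copy of $B$.
\item Timashev's Lemma~\ref{tim}: all $B$-stabilizers are connected, so every $B$-equivariant generator comes from a $G(\CO)$-equivariant one.
\end{itemize}
Your sketch has no substitute for either.

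\textbf{Step 1 is unjustified, and the paper does not need it.}
\begin{itemize}
\item It assumes that the equivalence of Conjecture~\ref{bzsv} intertwines the full spherical Hecke actions. That is not part of the hypothesis; the paper only uses linearity over $\Sym(\ft^*)^W[\hbar]$.
\item It asserts that $\tilD(\fY(\CK))^{I}$ is obtained from $\tilD(\fY(\CK))^{G(\CO)}$ by tensoring over the spherical Hecke category, i.e.\ that $D(\fY(\CK))$ is generated by its $G(\CO)$-invariants. This fails in general. For $\fY=G$ it would recover the affine Hecke category from spherical data. Already at the decategorified level, the image of $He\otimes_{eHe}eH\to H$ is the ideal $HeH$, which is proper because the Steinberg module is killed by $e$.
\item The spectral input you cite, $\Rep(B^\vee)\otimes_{\Rep(G^\vee)}-$, produces \emph{weakly} $B^\vee$-equivariant modules, not $B^\vee$-monodromic ones. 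It is also not the spectral counterpart of the Iwahori--spherical bimodule.
\end{itemize}
The paper avoids the Iwahori entirely, on both sides.
\begin{itemize}
\item Spectral side: view $G^\vee$-equivariant $Q_{\hbar=1}(\fX^\vee)$-modules as strongly equivariant $Q_{\hbar=1}(\fX^\vee)\otimes U(\fg^\vee)$-modules. Completion at regular $\lambda$ plus derived Beilinson--Bernstein localization \eqref{domi} then turns them directly into $B^\vee$-unipotent-monodromic modules.
\item Automorphic side: the passage from $G(\CO)$ to $B$ comes from fixed-point localization at $(\lambda,1)$ together with the two lemmas above.
\end{itemize}
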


\subsection{The Hecke action}\label{hecke} One can strengthen Conjecture \ref{koszul} in the following way. The category $D_B(\fY)$ carries an action of the monoidal category $D(B\backslash G/B)$ (i.e.\ the category of $B\times B$-equivariant $D$-modules on $G$. We shall usually refer to this category as the {\em Hecke category} of $G$). Also, let us denote by $D(B^{\vee}\dashedbackslash \, G^{\vee}\dashedslash B^{\vee})$ the category of $B^{\vee}\times B^{\vee}$ monodromic $D$-modules on $G^{\vee}$ with unipotent monodromy; we shall refer to it as the {\em monodromic Hecke category} for $G^{\vee}$. This is also a monoidal category, and this monoidal category acts on $(Q_{\hbar=1}(\fX^{\vee})\mod)^{B^{\vee}_\mon}$.

On the other hand, R.~Bezrukavnikov and Z.~Yun (cf.~\cite{by}) established a monoidal Koszul duality between $D(B\backslash G/B)$ and $D(B^{\vee}\dashedbackslash \, G^{\vee}\dashedslash B^{\vee})$. The stronger version of Conjecture \ref{koszul} says that
the Koszul duality of Conjecture \ref{koszul} is compatible with the action of the Hecke category on the LHS and the monodromic Hecke category on the RHS via the Bezrukavnikov-Yun duality.
In particular, the same should be true on the level of $\BZ/2$-graded categories if one replaces ``Koszul duality" with ``equivalence".

On the other hand, the duality of Bezrukavnikov and Yun can by itself be considered essentially as a special case of Conjecture \ref{koszul}.
Namely, let $\fY=G$ considered as a spherical $G\times G$-variety. In this case $\fX^{\vee}=T^*\fY^{\vee}$ where $\fY^{\vee}=G^{\vee}$ considered as a spherical $G^{\vee}\times G^{\vee}$-variety. Then the LHS of the duality in~Conjecture~\ref{koszul} is precisely $D(B\backslash G/B)$ while the RHS is $D(B^{\vee}\dashedbackslash \, G^{\vee}\dashedslash B^{\vee})$.
Note that~Conjecture~\ref{bzsv} is known in this case -- it is precisely the statement of the derived geometric Satake equivalence.

To formulate the precise statement, we will need the Radon transform (the long intertwining functor)
$I_{w_0}^{-1}\colon D(B^{\vee}\dashedbackslash \, G^{\vee}\dashedslash B^{\vee})\iso D(B^{\vee}\dashedbackslash \, G^{\vee}\dashedslash B^{\vee})$
(see~\S\ref{intertwining} for the precise definiton).

\begin{thm}\label{byun}
  \textup{(1)} The equivalence between $\BZ/2$-graded versions of the Hecke category for $G$ and the monodromic Hecke category for $G^{\vee}$ coming
  from the Bezrukavnikov-Yun Koszul duality is equal to the equivalence
  coming from~Theorem~\ref{main} composed with the functor $I_{w_0}^{-1}$. In particular, the above composition is monoidal.

  \textup{(2)} Recall that both categories in~\textup{(1)} above are linear over $\Sym(\ft^*)\otimes\Sym(\ft^*)$. The Bezrukavnikov-Yun
  equivalence is $\Sym(\ft^*)\otimes\Sym(\ft^*)$-linear, and the composition of the equivalence coming from~Theorem~\ref{main} with
  $I_{w_0}^{-1}$ becomes $\Sym(\ft^*)\otimes\Sym(\ft^*)$-linear after conjugation with the action of $w_0$ in the second variable.
\end{thm}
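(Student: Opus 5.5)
Both equivalences in (1) are $\BZ/2$-graded equivalences between the same pair of categories, and both are compatible with the action of the respective Hecke categories. So the plan is to reduce their comparison to comparing where they send a single generator, plus the endomorphisms of that generator.

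\emph{Generator on the Hecke side.} In $D(B\backslash G/B)$ I take $\Xi$, the $\BZ/2$-folding of the tilting (big projective) object, equivalently the $B$-equivariant object generated by the skyscraper at the open Bruhat cell after averaging. Alternatively I use $\delta_e$ together with the monoidal structure: the Hecke category is generated under colimits and the monoidal action by $\delta_e$. Every object is a convolution of $\delta_e$ with objects of the category itself, so this reduces everything to where $\delta_e$ goes.

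\emph{Step 1: trace the equivalence of Theorem~\ref{main} through derived Satake.} In the case $\fY=G$ with $G\times G$ acting, the construction behind Theorem~\ref{main} proceeds in two moves:
\begin{enumerate}
\item it passes from $D(\fY)^B$ to an $\hbar$-equivariant category over the loop space via the $S^1$-localization of~\cite{bzn};
\item it then applies Conjecture~\ref{bzsv}, which here is the derived Satake equivalence, and specializes $\hbar=1$.
\end{enumerate}
I would compute the image of $\delta_e$ (the constant sheaf on the closed orbit $B\backslash B\times B/B$) explicitly. Under derived Satake, the unit $\IC_0$ goes to the free module $\Sym(\fg^\vee[-2])\otimes\CO(G^\vee)$, quantized to $D(G^\vee)$. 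After taking $B^\vee\times B^\vee$-monodromic invariants this should give the free-monodromic object supported on the big cell, i.e.\ the free-monodromic tilting/costandard object. Bezrukavnikov--Yun send $\delta_e$ to the free-monodromic unit $\hat\delta_e$. The discrepancy is exactly the Radon transform $I_{w_0}$, because the big cell appears instead of the closed one; this is why $I_{w_0}^{-1}$ is composed.

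\emph{Step 2: monoidality and linearity.} The functor from Theorem~\ref{main} is built from functors equivariant for the right Hecke action, since $B\times B$ acts and the $\bzn$ localization is natural in the group. Both functors commute with the action of the Hecke category on itself by convolution, and $I_{w_0}^{-1}$ commutes with convolution on the appropriate side. Hence agreement on $\delta_e$ together with matching the module structure over the endomorphism algebra forces agreement on all objects. The endomorphism algebra is $\Sym(\ft^*)\otimes_{\Sym(\ft^*)^W}\Sym(\ft^*)$, or its completed monodromic counterpart.

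For (2), I would compute the action of the two copies of $\Sym(\ft^*)=H^*_{T}$ on both sides. On the Satake side the $\ft^*$-actions come from left and right monodromy. $I_{w_0}$ intertwines right monodromy with its $w_0$-twist, since convolution with the big cell standard object conjugates the torus action by $w_0$. This yields the stated twist.

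\emph{Main obstacle.} The main obstacle will be Step 1: pinning down precisely the image of the unit under the $\BZ/2$-folded localization plus Satake. The difficulty is that one must identify it as the big-cell free-monodromic object and not merely an object with the right endomorphisms, and must control the normalization (shifts and characters $\lambda$) well enough that the resulting identification with $I_{w_0}^{-1}$ composed with Bezrukavnikov--Yun is canonical rather than up to an autoequivalence. My first attempt would be to check it on the torus case $G=T$, where everything is explicit, and then use restriction to Levi/parabolic functors to pin it down in general.
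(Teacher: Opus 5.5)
\textbf{Verdict: there is a genuine gap.} Your Step~2 does not identify the functor, and the idea the paper's proof relies on is missing.

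Your Step~1 heuristic is consistent with the paper. The construction behind Theorem~\ref{main} localizes with the dominant--dominant functor $R\Gamma^{\widehat{\lambda,-w_0\lambda}}$. The monoidal identification of $D\CM^{\widehat{\Lambda\times\Lambda}}$ with $\HC(\fg^\svee)^{\widehat\lambda}$ instead uses $R\Gamma^{\widehat{\lambda,-\lambda-2\rho}}$, and the two differ by $I_{w_0}^{-1}$. So before composing with $I_{w_0}^{-1}$, the unit does land on a big-cell object, and this is why $I_{w_0}^{-1}$ appears.

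The problem is Step~2.
\begin{enumerate}
\item The functors you compare are (or are to be shown to be) monoidal. So each sends $\delta_e$ to the unit, compatibly with $\End(\delta_e)=\Sym(\ft^*)$. Agreement there is automatic and pins nothing down.
\item Your reduction needs $\Psi(X)=\Psi(X\star\delta_e)\cong\Psi_{\mathrm{BY}}(X)\star\Psi(\delta_e)$. This requires $\Psi$ to intertwine convolution on the source with the action on the target \emph{through the Bezrukavnikov--Yun functor}. That is Corollary~\ref{hecke-mod}, which the paper derives from the theorem, so it cannot be an input. If instead the target action is through $\Psi$ itself, the identity is just monoidality again.
\item Monoidal autoequivalences fixing $\delta_e$ do exist, for example conjugation by the invertible objects $\Delta_w$. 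Linearity (or the action on $\End(\delta_e)$) excludes those particular ones. But nothing in your argument rules out a nontrivial monoidal, linear autoequivalence that moves the other $\IC_w$'s. Neither the torus case nor restriction to Levis will control this.
\item The algebra $\Sym(\ft^*)\otimes_{\Sym(\ft^*)^W}\Sym(\ft^*)$ you quote is $\End(\IC_{w_0})$, not $\End(\delta_e)$. In any case, one object and its endomorphism ring do not determine a functor on the whole category without a generation statement and dg-level compatibility.
\end{enumerate}

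The missing idea is the rigidity statement of Lemma~\ref{lem-byun}. The Bezrukavnikov--Yun equivalence $D(B\backslash G/B)^{\BZ/2}\iso(\HC(\fg^\svee)^{\widehat\lambda})^{\BZ/2}$ is the \emph{unique} one that is monoidal, $\Sym(\ft^*\oplus\ft^*)$-linear, and sends every $\IC_w$ to a projective Harish-Chandra bimodule. Uniqueness goes as follows:
\begin{enumerate}
\item Compose with the inverse of Bezrukavnikov--Yun to get an autoequivalence sending each $\IC$ to a semisimple perverse object, hence to an $\IC$.
\item The $\Sym(\ft^*)\otimes\Sym(\ft^*)$-supports of $\RHom(\IC_w,\IC_w)$ (unions of graphs of $w'\le w$) are pairwise distinct. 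This forces $\IC_w\mapsto\IC_w$.
\item Then run the argument of~\cite{bbm}.
\end{enumerate}

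The projectivity condition must be checked for \emph{every} $\IC_w$, not only $\IC_e=\delta_e$. This uses all of derived Satake, not just its unit. For $V\in\Rep(G^\vee)$, the Satake sheaf $\calS_V$ goes on the dual side to the completion of $U(\fg^\svee)\otimes V$, which is projective. On the Hecke side its image is the hyperbolic restriction to $t^\lambda Gt^{-\lambda}$ (Corollary~\ref{braden}), which is semisimple by Braden's theorem~\cite{brad}. Every $\IC_w$ occurs as a summand of such an image for suitable $V$. Your Step~1 is the case of trivial $V$.

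Your explanation of the $w_0$-twist in~(2) is fine.
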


The following is an easy corollary of the proof of Theorem \ref{byun}:
\begin{cor}\label{hecke-mod}
The equivalence of Theorem \ref{main}(1) is compatible with the action of $D^{\BZ/2}(B\backslash G/B)$ on the LHS and the action of  $D^{\BZ/2}(B^{\vee}\dashedbackslash \, G^{\vee}\dashedslash B^{\vee})$ on the RHS via the $\BZ/2$-version of the Bezrukavnikov-Yun equivalence.
\end{cor}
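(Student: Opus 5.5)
The idea is to deduce Corollary~\ref{hecke-mod} from the mechanism behind Theorem~\ref{byun}. That theorem identifies the Bezrukavnikov--Yun equivalence with the functor of Theorem~\ref{main} for $\fY=G$ (with $G\times G$ acting), composed with $I_{w_0}^{-1}$. The functor of Theorem~\ref{main} for a general $\fY$ is built in the same way, and the plan is to check that this construction is natural with respect to convolution.

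\textbf{Step 1: trace the construction.} Starting from Conjecture~\ref{bzsv}, the functor of Theorem~\ref{main} is obtained in three moves:
\begin{itemize}
\item pass from $G(\CO)\rtimes\BC^\times$-equivariant objects on $\fY(\CK)$ to $I\rtimes\BC^\times$-equivariant ones, where $I\subset G(\CO)$ is the Iwahori subgroup;
\item on the dual side, pass from $G^\vee$-equivariant $Q_\hbar(\fX^\vee)$-modules to $B^\vee$-equivariant ones;
\item apply the $S^1$-equivariant localization of~\cite{bzn} (specialization at $\hbar=1$ followed by $2$-periodization), which turns loop-rotation-equivariant objects on $\fY(\CK)$ into $D(\fY)^B$ in the $\BZ/2$-graded sense, and turns the $B^\vee$-equivariant side into $B^\vee$-monodromic modules.
\end{itemize}

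\textbf{Step 2: Hecke actions on both sides.} On the automorphic side, $D(I\backslash G(\CK)/I)^{\BC^\times}$ acts by convolution on $D(\fY(\CK))^{I\rtimes\BC^\times}$. Under the localization, this action specializes to the action of $D(B\backslash G/B)$ on $D(\fY)^B$, since the loop-rotation fixed points of the affine flag variety that are relevant here reduce to the finite flag variety. On the spectral side, the corresponding structure is the action of $G^\vee$-equivariant $\hbar$-quantized Hecke bimodules. For $\fY=G$ these are exactly what the derived Satake equivalence yields, and their localization is the monodromic Hecke category $D(B^{\vee}\dashedbackslash \, G^{\vee}\dashedslash B^{\vee})$.

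\textbf{Step 3: realize the action by an external product.} The key observation is that the action of the Hecke category on $D(\fY)^B$ can be written as
\[
D(G)^{B\times B}\otimes_{D(G)^{B}} D(\fY)^{B}.
\]
Dually, the action on $Q(\fX^\vee)$-modules comes from $Q(T^*G^\vee)\otimes Q(\fX^\vee)$ with a diagonal reduction. Conjecture~\ref{bzsv} for the pair $(G\times\fY,\ T^*G^\vee\times\fX^\vee)$ is compatible with this diagonal structure, because the Satake-type equivalence of Conjecture~\ref{bzsv} is assumed to be linear over the spherical Hecke category, equivalently over $\Rep(G^\vee)$. Applying the Step~1 construction functorially to the action map, and using the identification of Theorem~\ref{byun}(1), gives compatibility of the functor of Theorem~\ref{main}(1) with the Bezrukavnikov--Yun equivalence. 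The extra twist $I_{w_0}^{-1}$ is harmless: it is a monoidal autoequivalence of the monodromic Hecke category only up to conjugation, so it should be absorbed by the corresponding $w_0$-twist recorded in Theorem~\ref{byun}(2).

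\textbf{Main obstacle.} I expect the hard step to be showing that the $S^1$-localization of~\cite{bzn} is monoidal and module-compatible. Concretely, one needs the isomorphism
\[
(\text{Iwahori Hecke}\star -)\big|_{\hbar=1,\ \BZ/2}\;\simeq\; D(B\backslash G/B)\star(-)\big|_{\hbar=1,\ \BZ/2},
\]
including the passage to completions of $\BZ/2$-graded categories discussed in~\S\ref{completion}. My first attempt would be to verify this on compact generators, namely $I$-orbit closures, using that the localization is given by tensoring over $H^*_{\BC^\times}(\pt)$, which commutes with convolution by the projection formula. I would then extend to the whole category by continuity.
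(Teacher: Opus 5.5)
There is a genuine gap, and it lies in Steps~1--2. The construction behind Theorem~\ref{main} never passes to Iwahori-equivariant objects. One stays $G(\CO)\rtimes\BC^\times$-equivariant and completes at the \emph{regular} point $(\lambda,1)$; by \S\ref{variant} this is the same as completing the $\sT\times\BC^\times$-equivariant category. Localization to the fixed locus, which is a translate of $\fY$, together with Lemmas~\ref{tim} and~\ref{B-orbit}, then produces $(D(\fY)^B)^{\BZ/2}$. Here $B$ appears as the centralizer in $G(\CO)$ of the relevant cocharacter. Dually, one completes the $G^\vee$-equivariant category at $\lambda$ and uses Beilinson--Bernstein localization on $G^\vee/N^\vee$ to land in $B^\vee$-monodromic modules. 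The $B^\vee$-equivariant category comes from the fiber at $\lambda$, not from restricting $G^\vee$ to $B^\vee$.

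Consequently the action you use in Step~2, of $D(I\backslash G(\CK)/I)^{\BC^\times}$ on $D(\fY(\CK))^{I\rtimes\BC^\times}$, lives on a category about which Conjecture~\ref{bzsv} says nothing. You also pair it with the wrong spectral object. $G^\vee$-equivariant $U_\hbar(\fg^\vee)$-bimodules are the derived Satake dual of the \emph{spherical} Hecke category, whereas the Iwahori--Hecke category has a dual of Steinberg-variety type. So this action cannot be transported across the assumed equivalence, and the ``main obstacle'' you isolate belongs to a route that does not close. Separately, the expression $D(G)^{B\times B}\otimes_{D(G)^B}D(\fY)^B$ in Step~3 has no evident meaning.

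The missing idea, which is what makes the statement an easy consequence of the proof of Theorem~\ref{byun}, is the following. $D(B\backslash G/B)^{\BZ/2}$ is itself the output of the same three operations applied to the spherical Hecke category $(D(G(\CK))^{(G(\CO)\times G(\CO))\rtimes\BC^\times})^{\BZ/2}$ at $(\lambda,-w_0\lambda,1)$. This is the diagonal case, and the proof of Theorem~\ref{byun} shows that the resulting equivalence $\Psi_\lambda$ with $(\HC(\fg^\vee)^{\widehat\lambda})^{\BZ/2}$ is the Bezrukavnikov--Yun equivalence.

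Your Step~3 does name the right input: compatibility of Conjecture~\ref{bzsv} with the spherical Hecke action. This must mean the \emph{derived} Satake category, which is stronger than linearity over $\Rep(G^\vee)$. It should be used directly. Apply operations 1)--3) to the action maps of the spherical Hecke category on $\tilD(\fY(\CK))^{G(\CO)\rtimes\BC^\times}$ and of $\hbar$-Harish-Chandra bimodules on $(Q_\hbar(\fX^\vee)\mod)^{G^\vee}$. After completion the fixed-point loci are a conjugate of $G$ and a translate of $\fY$. Using that $*$- and $!$-restrictions agree after completion, the automorphic action becomes the convolution action of $D(B\backslash G/B)$ on $D(\fY)^B$. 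The spectral action becomes the Harish-Chandra action on $B^\vee$-monodromic modules.

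Finally, $I_{w_0}^{-1}$ is not absorbed by the $w_0$-twist of Theorem~\ref{byun}(2), which concerns $\Sym(\ft^*)\otimes\Sym(\ft^*)$-linearity rather than monoidality. It records the difference between two localizations. One is the dominant localization $R\Gamma^{\widehat{\lambda,-w_0\lambda}}$ used in the construction. The other is the monoidal $R\Gamma^{\widehat{\lambda,-\lambda-2\rho}}$, through which the monodromic Hecke category is identified with Harish-Chandra bimodules and acts on the right-hand side.
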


\subsection{Twisted cotangent case}\label{twisted}
In the above discussion we assumed that $\fX=T^*\fY$ for some spherical $G$-variety. In fact, all the arguments go through in the
following ``twisted" case: assume that we are given a $G$-equivariant $\GG_a$ torsor $\psi$ on $\fY$, and assume that $\fX$ is the
Hamiltonian reduction of $T^*\psi$ by $\GG_a$ at level~$1$ (we shall write $\fX=T_\psi^*\fY$); we shall refer to such an $\fX$ as the
twisted (by $\psi$) cotangent bundle. In this case the category $D(\fY(\CK))^{G(\CO)}$ is replaced by
the corresponding category of twisted (by the exponential $D$-module on the additive group) $G(\CO)$-equivariant $D$-modules on $\fY$;
in other words these are $(\GG_a,\text{exp})$-equivariant $D$-modules on $\psi$. Likewise the category $D(\fY)^B$ is replaced by the category $D(\fY)^{B,\psi}$ of twisted $B$-equivariant $D$-modules on $\fY$. The proofs extend verbatim to this case.

\subsection{An isomorphism of $W$-modules}
Let $W$ denote the Weyl group of $G$. Since it is also the Weyl group of $G^{\vee}$, we have natural identifications
\begin{equation}\label{K}
K_0(D(B\backslash G/B))\simeq \ZZ[W]\simeq K_0(D(B^{\vee}\dashedbackslash \, G^{\vee}\dashedslash B^{\vee})).
\end{equation}
In the above equation one can also pass to the $\BZ/2$-graded versions of all the categories, since this does not change the Grothendieck group.
The Bezrukavnikov-Yun equivalence induces thus an involutive automorphism $\sigma$ of $\ZZ[W]$ sending every $w$ to $(-1)^{\ell(w)}w$.

On the other hand, we set $\fX=T^*_\psi\fY$, and consider a symplectic variety $\fX\times T^*\CB$
equipped with a Hamiltonian $G$-action. (Here $\CB$ stands for the flag variety $\CB=G/B$.)
We denote by $\tilde\bmu\colon \fX\times T^*\CB\to\fg^*$ the moment map. Then the zero moment level
$\widetilde{\Lambda}_\fX:=\tilde\bmu{}^{-1}(0)$ is the union of $\psi$-twisted conormal bundles to the
{\em relevant} $G$-orbits in $\fY\times\CB$. In particular, $\widetilde{\Lambda}_\fX\subset\fX\times T^*\CB$ is
a Lagrangian subvariety. Recall that according to~\cite[Theorem 3.4.1]{cg}, the group algebra
$\BC[W]$ is naturally isomorphic to the convolution algebra $H(Z_\fg)$ spanned by the top degree part
of the Borel-Moore homology of the Steinberg variety of triples $Z_\fg=T^*\CB\times_\fg T^*\CB$.
This algebra acts naturally on $H(\widetilde{\Lambda}_\fX)$: the top degree part of the Borel-Moore
homology of $\widetilde{\Lambda}_\fX$. Similarly, on the dual side, $\BC[W]\cong H(Z_{\fg^\vee})$
acts on $H(\widetilde{\Lambda}_{\fX^\vee})$. In~\cite{fgt} we formulated a conjecture relating the
$H(Z_\fg)\cong\BC[W]\cong H(Z_{\fg^\vee})$-modules $H(\widetilde{\Lambda}_\fX)$ and
$H(\widetilde{\Lambda}_{\fX^\vee})$. This conjecture is false as stated; we are grateful to Guy Shtotland
who drew our attention to this fact, see~\cite[Remark 1.5]{s}. The corrected version reads as follows:

\begin{conj}\label{lagrangian}
  The $\BC[W]$-modules $H(\widetilde{\Lambda}_\fX)^*$ and
  $H(\widetilde{\Lambda}_{\fX^\vee})\otimes\mathrm{sign}$ are isomorphic.\footnote{Note that any finite dimensional $\BC[W]$-module is
  non-canonically isomorphic to its dual.}
\end{conj}

In \S\ref{Lagrange} we explain how to deduce Conjecture \ref{lagrangian} from the previous results in a number of cases by for example
identifying $H(\widetilde{\Lambda})^*$ with $K(D(\fY)^{B,\psi}_{\on{comp}})$ (the Grothendieck group of the category of compact objects),
and $H(\widetilde{\Lambda}^{\vee})$ with $K(D(\fY^{\vee})^{B^{\vee}_\mon,\psi^{\vee}}_{\on{comp}})$ when it is possible
(in fact in \S\ref{Lagrange} something a bit more general is discussed).


\subsection{General monodromy} One can ask what happens if in the RHS of Theorem~\ref{main} we consider more general
(not necessarilly unipotent) monodromy. It turns out that the same methods give rise to the following.
We fix a Cartan subgroup $\sT\subset B$. Let us fix a regular (but not necessarilly integral) coweight $\lambda$ of $G$.
Let $\sT_{\lambda}$ denote the subgroup of $\sT$ generated by $\exp(2\pi i\lambda)$ and let $B_{\lambda}$ be its centralizer
in $B$.  Let $(Q_{\hbar=1}(\fX^{\vee})\mod^{B^{\vee}_{\mon,\lambda}})^{\BZ/2}$ denote the $\BZ/2$-graded version of the category of
$\lambda$-monodromic modules over $Q_{\hbar=1}(\fX^{\vee})$.  Then we have

\begin{thm}\label{main'}
  Under the assumptions of Theorem \ref{main} the categories $(D(\fY^{\sT_{\lambda}})^{B_{\lambda}})^{\BZ/2}$ and
  $(Q_{\hbar=1}(\fX^{\vee})\mod^{B^{\vee}_{\mon,\lambda}})^{\BZ/2}$ are equivalent.
\end{thm}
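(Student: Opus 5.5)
We follow the strategy used for Theorem~\ref{main}, replacing the $S^1$-localization at the trivial point of the torus by localization at the point $\exp(2\pi i\lambda)$. The starting point is Conjecture~\ref{bzsv}, which we restrict to the Iwahori subgroup. Let $I\subset G(\CO)$ be the Iwahori subgroup attached to $B$. Conjecture~\ref{bzsv}, tensored over the spherical Hecke category with the Iwahori--spherical bimodule, gives an equivalence
\[
\tilD(\fY(\CK))^{I\rtimes\BC^\times}\simeq \bigl(Q_\hbar(\fX^\vee)\mod\bigr)^{B^\vee}
\]
(up to the usual renormalizations). Here the right-hand side is a category of modules over $Q_\hbar(\fX^\vee)\otimes \Sym(\ft^\vee)$, and the generic point of the equivariant parameters on the left, $\ft\oplus\BC\hbar$, corresponds to the monodromy parameter on the right.

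On the right-hand side we specialize $\hbar=1$ and complete at the monodromy parameter $\lambda$. Taking $B^\vee$-equivariant objects with $\ft^\vee$-action completed at $\lambda$ is the same as $\lambda$-monodromic modules; this is the Koszul/completion step. After $\BZ/2$-folding, this yields exactly $(Q_{\hbar=1}(\fX^\vee)\mod^{B^\vee_{\mon,\lambda}})^{\BZ/2}$.

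On the left-hand side the corresponding operation is completion of the $H^*_{\sT\times\BC^\times}(\pt)$-linear category at the point $(\lambda,1)$. The point of passing to $\BZ/2$-gradings is that in the $\BZ/2$-graded (periodic) world, completion at a non-zero point of $\mathrm{Lie}(\sT\times S^1)$ is governed by equivariant localization to the fixed points of the corresponding one-parameter subgroup. This is the variant of~\cite{bzn} already used for Theorem~\ref{main}. The relevant subgroup is generated by loop rotation twisted by $\lambda$, i.e.\ $t\mapsto \lambda(t)\cdot(\text{rotation by }t)$, together with its closure $\sT_\lambda\times S^1$ when $\lambda$ is not integral.

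The next step is to compute these fixed points. The fixed locus of $\fY(\CK)$ under $\lambda$-twisted rotation consists of loops $y(z)$ with $y(e^{2\pi i s}z)=\exp(2\pi i s\lambda)y(z)$. For $\lambda$ regular, the $\sT_\lambda$-part forces the constant term to lie in $\fY^{\sT_\lambda}$, and the rotation part kills all higher Fourier modes except those matching weights. After conjugating by $z^{\lambda}$, the fixed locus should identify with $\fY^{\sT_\lambda}$ itself, in the same way that the unipotent case gave $\fY$. Likewise, the centralizer of the twisted rotation in $I$ is $B_\lambda$, the centralizer of $\sT_\lambda$ in $B$; regularity of $\lambda$ ensures no affine root subgroups survive. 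Localization then gives
\[
\bigl(\tilD(\fY(\CK))^{I\rtimes \BC^\times}\bigr)^{\wedge}_{(\lambda,1)}\simeq \bigl(D(\fY^{\sT_\lambda})^{B_\lambda}\bigr)^{\BZ/2},
\]
and combining this with the previous two steps proves the theorem.

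The main obstacle is this fixed-point computation together with the localization statement for a non-integral, dense-closure subgroup. Specifically, one must check that localization commutes with renormalization, and that the normal directions to the fixed locus contribute invertible Euler classes at $(\lambda,1)$. The latter should hold precisely because $\lambda$ is regular, since regularity ensures that no nonzero weight of $\sT\times\BC^\times$ vanishes on $(\lambda,1)$ in the normal directions. I would first treat the ind-finite-type pieces $\fY(\CK)_{\le n}$, establish localization there with explicit weight bookkeeping, and then pass to the colimit.
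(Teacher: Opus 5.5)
The gap is in your first step. Theorem~\ref{main'} assumes only Conjecture~\ref{bzsv}, which concerns the $G(\CO)\rtimes\BC^\times$-equivariant category. The Iwahori-level equivalence $\tilD(\fY(\CK))^{I\rtimes\BC^\times}\simeq(Q_\hbar(\fX^\vee)\mod)^{B^\vee}$ does not follow from it by base change.

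The comparison functor $D(I\backslash G(\CK)/G(\CO))\otimes_{D(G(\CO)\backslash G(\CK)/G(\CO))}\tilD(\fY(\CK))^{G(\CO)}\to\tilD(\fY(\CK))^{I}$ is an equivalence when $D(\fY(\CK))$ is generated by its spherical vectors as a $G(\CK)$-category, but not in general. It already fails for $\fY=G$ under $G\times G$. There, base change on both sides produces only the two-sided ideal of $D(I\backslash G(\CK)/I)$ generated by objects factoring through $\Gr$. This is a proper ideal: it does not contain the unit $\delta_1$. This is the categorical counterpart of the Steinberg representation having Iwahori-fixed vectors but no spherical ones.

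Even granting some Iwahori-level statement, identifying its dual side with $B^\vee$-equivariant $Q_\hbar(\fX^\vee)$-modules carrying a $\Sym(\ft^\vee)$-action is an independent input: it is the Iwahori part of the full BZSV local conjecture. Once you grant it, your second step is a tautology. So all the content on the dual side has been moved into a stronger, unproved hypothesis. With that extra hypothesis your argument would be a legitimate route, but it proves a weaker theorem.

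The missing idea is that the passage from $G$ to $B$ must happen \emph{after} completing at the regular point, on both sides.

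On the dual side, the paper views weakly $G^\vee$-equivariant $Q_{\hbar=1}(\fX^\vee)$-modules as strongly $G^\vee$-equivariant $Q_{\hbar=1}(\fX^\vee)\otimes U(\fg^\svee)$-modules. It completes at the central character of $\lambda$ via $ZU(\fg^\svee)$. It then uses Beilinson--Bernstein localization ($R\Gamma^{\widehat\lambda}(G^\vee/N^\vee,-)$ is an equivalence for regular $\lambda$). This lands in strongly $G^\vee$-equivariant $Q_{\hbar=1}(\fX^\vee)\otimes\CalD(G^\vee/N^\vee)$-modules with monodromy $\lambda$, i.e.\ $\lambda$-monodromic $B^\vee$-modules.

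On the automorphic side, \S\ref{variant} shows that after completion at a regular point the $G$- and $\sT$-equivariant versions agree. So replacing $G(\CO)$ by $I$ is legitimate only after completing. Your automorphic step is then close to the paper's, with two further issues.

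First, Theorem~\ref{loc} gives only a full embedding into $\sT\times\BC^\times$-equivariant sheaves on the fixed locus. To obtain all of $(D(\fY^{\sT_\lambda})^{B_\lambda})^{\BZ/2}$ you need two further facts:
\begin{enumerate}
\item each connected component of an orbit's intersection with the fixed locus is a single orbit of the centralizer (the analogue of Lemma~\ref{B-orbit});
\item the relevant stabilizers are connected (Lemma~\ref{tim}).
\end{enumerate}
Only then do the generators match.

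Second, regularity does not kill affine root subgroups. For each root with $\langle\alpha,\lambda\rangle\in\BZ$, exactly one of $\pm\alpha$ contributes its root subgroup in loop degree $|\langle\alpha,\lambda\rangle|\geq 1$. These subgroups are what assemble into a copy of $B_\lambda$. By contrast, the Euler-class point you single out as the main obstacle is the standard part.
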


Of course, we also have

\begin{conj}\label{fgt'}
Under the above assumptions the categories $D(\fY^{\sT_{\lambda}})^{B_{\lambda}}$ and $(Q_{\hbar=1}(\fX^{\vee})\mod)^{B^{\vee}_{\mon,\lambda}}$ are Koszul dual.
\end{conj}


\begin{ex} Let us as before consider the diagonal case, i.e.\ $\fY=G$ acted on by the group $G\times G$ and assume that we start with a regular coweight of $G\times G$ of the form
$(\lambda,\lambda)$ for some regular coweight $\lambda$ of $G$. Let $G_{\lambda}$ denote the centralizer of $\exp(2\pi i\lambda)$ in $G$.
Then Conjecture \ref{fgt'} says that the categories $D(B_{\lambda}\backslash G/B_{\lambda})$ and $D((B^{\vee},\lambda)\dashedbackslash \, G^{\vee}\dashedslash (B^{\vee},\lambda))$ are Koszul dual where the latter stands for the corresponding monodromic version of the category $D(B^{\vee}\dashedbackslash \, G^{\vee}\dashedslash B^{\vee})$.
This statement in fact follows from the main results of \cite{by} and \cite{ly}. It should not be difficult to extend the arguments of \S\ref{hecke} to show that the equivalence of the
corresponding $\BZ/2$-graded categories we get from Theorem \ref{main'} is the same as the equivalence one gets by combining the results of \cite{by} and \cite{ly}, but we shall not pursue it in this paper.
\end{ex}

\subsection{Idea of the proof}
Let us very briefly outline the basic idea of the proof. We start with the equivalence of Conjecture \ref{bzsv} and choose a regular
coweight $\lambda\in\ft$ of $G$. (Here $\ft=\on{Lie}\sT$ is a Cartan Lie subalgebra in $\fg=\on{Lie}G$.)
After passing to the corresponding $\BZ/2$-graded categories everything becomes linear over
$\Sym(\ft^*\oplus \BC)$ and we can perform the following procedures to both sides of the equivalence of Conjecture \ref{bzsv}:

1) Completion at $a=(\lambda,1)$.

 2) Restriction to the point 1 in the second summand of $\gra=\ft\oplus \BC$. We get a category linear over $\Sym(\ft^*)$).

 3) Shift by $\lambda$ in $\ft$ (i.e.\ this is just the identity operation on the level of abstract categories, but we change the action of $\Sym(\ft^*)$ via the pullback under the automorphism $x\mapsto x+\lambda$ of $\ft$).

\smallskip
\noindent
We then check that the above procedures applies to the LHS and the RHS of the $\BZ/2$-version of the equivalence of Conjecture \ref{bzsv} lead precisely to the LHS and the RHS of the equivalence of Theorem \ref{main}.
The other statements announced above are proven by similar techniques.

One remark is in order: a priori this construction depends on the choice of $\lambda$. However, we show in~\S\ref{indep}
that the resulting equivalence is in fact independent of this choice.

We should note that our approach is very similar to the arguments of~\cite{bzn}, although with some differences.

\subsection{Acknowledgments}
We are grateful to R.~Bezrukavnikov, D.~Nadler, G.~Shtotland, D.~Timashev, R.~Yang and Z.~Yun for the useful discussions.
Above all we are indebted to V.~Ginzburg. He decided not to sign this work in the author's capacity, though it is clear
that our note is a direct outgrowth of his fundamental work~\cite{bgs} and our previous paper~\cite{fgt}. A.B.\ was partially supported by NSERC.
The research of M.F.\ was supported by the Israel Science Foundation (grant No.~994/24) and BSF (grant No.~3013008329).


\section{Generalities on completions of equivariant derived categories}

\subsection{Completion of a dg-category}
Let $\calC$ be a cocomplete dg category which is linear over a commutative regular noetherian ring $R$
(i.e.\ it is enriched over the corresponding dg category of $R$-modules).
For any $x\in \Spec(R)$ let $\hatR_x$ be the completion of $R$ at $x$. We consider the category $R\mod^\wedge_x$ which can be described as
the cocompletion of the category of coherent objects of $R$-mod, which are set-theoretically supported at the
closure of $x$ (usually we will be applying this to a closed point $x$).
We then define $\calC^\wedge_x =\calC\underset{R\mod}\otimes R\mod^\wedge_x$. We have a natural functor from $\calC$ to
$\calC^\wedge_x$. If in addition we assume that $\calC$ is compactly generated and that Hom's between compact objects are
compact objects of $R$-mod (which in our case is the same as coherent complexes), then for two compact objects $X$ and $Y$ the
space of Hom's between their images in $\calC^\wedge_x$ is equal to $\Hom(X,Y)\underset{R}\otimes \hatR_x$ (warning: the image of
compact objects in the completion is usually not compact).

Similar (somewhat easier) constructions work for the fiber $\calC_x$ of $\calC$ at $x$. Also the constructions can be carried over in the world of differential $\BZ/2$-graded categories instead
of dg categories.

\subsection{$\BZ/2$-graded equivariant derived categories and their completions}
\label{completion}
Let $A$ be a complex algebraic torus acting on an algebraic variety $Y$ over $\mathbb C$. Abusing slightly the notation we let
$\calC$ as above be the $\BZ/2$-graded renormalized equivariant derived category
$(\tilD(Y)^A)^{\BZ/2}$, i.e.\ the 2-periodization of the renormalized equivariant derived category.
(More precisely we should work with the corresponding dg category and its $\BZ/2$-graded version but we are
going to ignore this in the notation.)
Then this category is enriched over $R$-mod where $R=H^\bullet_A(\pt)=\Sym(\gra^*)$ where $\gra$ denotes the Lie algebra of $A$.
Thus for any $a\in \gra$ we can consider the completed
derived category $((D(Y)^A)^{\BZ/2})^\wedge_a$. It is easy to see that if we take $a=0$, then the corresponding completion is just
$(D(Y)^A)^{\BZ/2}$.\footnote{This statement makes sense and it is true also for the corresponding $\ZZ$-graded categories.}

\begin{ex}
  Let $Y=\mathrm{pt}$. Then $(D(Y)^A)^{\BZ/2}$ is the $\BZ/2$-derived category of $\Sym(\gra^*)$-modules supported at $0$, while
  $(\tilD(Y)^A)^{\BZ/2}$ is the corresponding category of all $\Sym(\gra^*)$-modules.
Similarly, $\tilD(Y)^A$ is the derived category of all $\Sym(\gra^*)$ dg-modules, while $D(Y)^A$ is the ind-completion of the derived category of finite-dimensional graded modules.
\end{ex}

\begin{thm}\label{loc}
Let $Y^a$ denote the fixed points of $a$ on $Y$ and let $i\colon Y^a\to Y$ be the natural inclusion.
Then the following holds:
\begin{enumerate}
\item
  The category $((\tilD(Y^a)^A)^{\BZ/2})^\wedge_a$ is naturally equivalent to $((\tilD(Y^a)^A)^{\BZ/2})^\wedge_0=(D(Y^a)^A)^{\BZ/2}$
  after applying shift by $a$ in the Lie algebra of $\gra$ (i.e.\ we have an equivalence of abstract categories, and the two
  actions of $\Sym(\fa^*)$ differ by shift by $a$, i.e.\ by the pullback under the automorphism $x\mapsto x+a$ of $\fa$).
\item
The natural morphism $i^!\to i^*$ becomes an isomorphism after completion at $a$.
\item
The resulting functor $((\tilD(Y)^A)^{\BZ/2})^\wedge_a\to (D(Y^a)^A)^{\BZ/2}$ is a full embedding.
\item
  The same is true when completion at $a$ is replaced with fiber at $a$ and $(D(Y^a)^A)^{\BZ/2}$ is replaced by $(D(Y)^{A_\mon})^{\BZ/2}$.
  In particular $(\tilD(Y)^A)^{\BZ/2}_a$ is a full subcategory of $(D(Y^a)^{A_\mon})^{\BZ/2}$.
\end{enumerate}
\end{thm}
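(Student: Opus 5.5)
Part (1) is where I would start. On $Y^a$ the element $a\in\gra$ acts trivially, so the subtorus $A_a$ (the closure of $\exp(\BC a)$) acts trivially on $Y^a$. After $\BZ/2$-periodization, the $A$-equivariant category of a space with trivial $A_a$-action splits off the factor $H^\bullet_{A_a}$-modules. The key point is that in the $\BZ/2$-graded world the cohomological grading no longer forces $\Sym(\gra^*)$ to act through its augmentation. Concretely, I would write $\tilD(Y^a)^A$ as modules over the equivariant "Cartan" model. Here the Chern--Weil / Cartan differential for the trivially acting direction is $0$. In the 2-periodic setting one may then twist by the automorphism $x\mapsto x+a$ of $\Sym(\gra^*)$: the degree-2 generators can be shifted by the scalar $\langle\cdot,a\rangle$, which is only possible once degree $2\equiv0$. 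This gives an autoequivalence of $(\tilD(Y^a)^A)^{\BZ/2}$ carrying completion at $a$ to completion at $0$. Finally, completion at $0$ is identified with $(D(Y^a)^A)^{\BZ/2}$ by the remark preceding the theorem. To make this rigorous I would use that the $A$-action on $Y^a$ factors through a torus acting compatibly, and realize the shift via tensoring with the 2-periodic "exponential" of the invertible character sheaf attached to $a$ on $BA_a$.

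Part (2) is a localization theorem of Atiyah--Bott / Goresky--Kottwitz--MacPherson type, in the form used in \cite{bzn}. I would reduce by dévissage (compact generation, stratification of $Y$ into $A$-orbit-type strata) to $Y\setminus Y^a$. The cone of $i^!\to i^*$ is built from $j_*j^*$ and $j_!j^*$ terms supported off $Y^a$, so it suffices to show that every object of $\tilD(Y\setminus Y^a)^A$ dies after completion at $a$. On $Y\setminus Y^a$, each point has stabilizer Lie algebra not containing $a$. Hence on each $A$-orbit $A/H$ the $\Sym(\gra^*)$-action on Homs factors through $\Sym(\fh^*)$, which is supported on $\fh\not\ni a$. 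By finiteness of orbit types (Sumihiro plus noetherian induction), Homs between compact objects are supported on a finite union of proper subspaces avoiding $a$. Therefore the completion at $a$ kills them.

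Parts (3) and (4) follow from this. By (2) and the previous paragraph, $i^*$ induces an equivalence $((\tilD(Y)^A)^{\BZ/2})^\wedge_a\simeq((\tilD(Y^a)^A)^{\BZ/2})^\wedge_a$ on Homs between compact objects, because $\Hom(X,i_*i^*Y')$ and $\Hom(X,Y')$ differ by something supported away from $a$, and completion is exact. Composing with (1) gives the functor to $(D(Y^a)^A)^{\BZ/2}$, and full faithfulness is checked on compact generators, using the Hom formula $\Hom(X,Y)\otimes_R\hatR_a$ together with the fact that both sides commute with colimits in the first variable. The embedding is "only" full because the images of compacts generate possibly a proper subcategory. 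For (4) I would run the same argument with $-\otimes_R R/\fm_a$ in place of $\hatR_a$. There the shifted fiber at $0$ becomes the monodromic category: the fiber of the equivariant category at $0$ is the $\BZ/2$ version of $D(Y)^{A_\mon}$, since killing $H^\bullet_A$ is forgetting equivariance up to unipotent monodromy.

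I expect part (1) to be the main obstacle: producing the shift autoequivalence canonically at the level of dg (2-periodic) categories rather than just on Hom spaces. I would first try to realize it as tensoring with a 2-periodic local system on $BA$ whose characteristic class is $a$, as in \cite{bzn}.
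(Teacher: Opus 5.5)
Your proposal is correct and follows the paper. The only part the paper actually argues is (2), stated as a lemma for any $A$-stable locally closed $Z\supset Y^a$, and its proof is exactly your reduction: the cones of $\eta_!\eta^!\to\mathrm{id}$ and $\mathrm{id}\to\eta_*\eta^*$ come from $\tilD(Y\setminus Z)^A$, which dies after completion at $a$ because $Y\setminus Z$ has no $a$-fixed points. Your stabilizer/support argument supplies the step the paper calls ``easy to see'', and the paper leaves (1), (3), (4) as standard.

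One caution on (1). The shift cannot be realized by tensoring with an object of $\tilD(BA_a)^{\BZ/2}$. Any such tensoring is $H^\bullet_A(\pt)$-linear, so it cannot move support from $a$ to $0$. No such device is needed, however. The splitting you already describe, $\tilD(Y^a)^A\simeq\tilD(Y^a)^{A'}\otimes\Sym(\gra_a^*)\mod$ for $A=A_a\times A'$ with $\gra_a=\on{Lie}A_a$, lets you restrict scalars along the translation automorphism of the even $\BZ/2$-graded algebra $\Sym(\gra_a^*)$. That construction is already canonical at the dg level, so the ``main obstacle'' you anticipate is not actually there.
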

The proof is standard. Let us comment on (2) since we are going to need later the following generalization of it:
\begin{lem}
Let $\eta\colon Z\to Y$ be locally closed embedding of an $A$-invariant subvariety such that $Z$ contains $Y^a$. Then after completion at $a$ the functors
$\eta^*$ and $\eta^!$ are naturally isomorphic.
\end{lem}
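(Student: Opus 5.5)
The plan is to reduce to the statement that, after completion at $a$, every object supported on $Y\setminus Z'$ becomes zero, where $Z'\supset Y^a$ is an $A$-invariant open or closed piece. The key input is the standard localization fact. Suppose $U$ is an $A$-variety with $U^a=\emptyset$. Then the equivariant cohomology $H^\bullet_A$ of any object of $\tilD(U)^A$ is a $\Sym(\fa^*)$-module supported away from $a$, so its completion at $a$ vanishes. Concretely, I would stratify $U$ into finitely many $A$-invariant locally closed pieces on which $A$ acts through a quotient $A/A'$ with $a\notin\Lie A'$. On such a piece the equivariant category is linear over $H^\bullet_{A'}(\pt)$ pulled back via $\Sym(\fa^*)$, so it is killed by a polynomial nonvanishing at $a$. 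Devissage (gluing triangles) shows that the image of $\tilD(U)^A$ in the completion at $a$ is zero.

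First I would factor $\eta$ as a closed embedding $Z\hookrightarrow \bar Z'$ followed by an open embedding $\bar Z'\hookrightarrow Y$, where $\bar Z'$ is open in $Y$ and $Z$ is closed in $\bar Z'$. Both steps preserve the condition of containing $Y^a$, since $\bar Z'^a=Y^a$. It then suffices to treat the open and closed cases separately, because $*$- and $!$-pullbacks compose. For an open embedding $j$, we have $j^*=j^!$ already, so nothing needs to be done. For a closed embedding $i\colon Z\to Y$ with open complement $j\colon U\to Y$, we have $U^a=\emptyset$. The natural map $i^!\to i^*$ fits, after applying $i_*$, into the comparison of the triangles
\[
i_*i^!\to \mathrm{id}\to j_*j^*, \qquad j_!j^*\to \mathrm{id}\to i_*i^*.
\]
From these one gets that the cone of $i^!\to i^*$ is expressed through $i^*j_*j^*$, namely $i^*j_*j^*[-1]$ up to the standard identification. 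Since $j^*\mathcal F$ lies in $\tilD(U)^A$, it becomes zero after completion at $a$. Moreover, $i^*j_*$ is $\Sym(\fa^*)$-linear, so the functor commutes with completion and the cone vanishes in the completed category. Because $i_*$ is fully faithful, the map $i^!\to i^*$ becomes an isomorphism. Finally, I would check that all functors involved are $\Sym(\fa^*)$-linear, so they descend to the completions $\calC\otimes_{R\mod}R\mod^\wedge_a$; this is formal.

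The main obstacle is the vanishing step: showing that $\tilD(U)^A$ maps to zero in the completion at $a$ when $U^a=\emptyset$. The difficulty is that the completion is defined by a tensor product of categories rather than objectwise, and the objects are renormalized rather than compact. I would first check vanishing on compact generators, which are coherent and locally compact objects. For these, $\End$ is a coherent $R$-module supported on the union of the $\Lie$ of stabilizers. That support misses $a$ by the stratification argument: generic stabilizers on each stratum are subgroups $A'$ with $a\notin\Lie A'$, which holds precisely because $U^a=\emptyset$. The endomorphisms of the image therefore become $\End\otimes_R\hatR_a=0$, so the image of each generator is zero, and hence the whole image is zero.
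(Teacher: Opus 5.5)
Your proposal is correct and follows essentially the paper's route. The paper also reduces to the closed case (it factors $\eta$ as open-in-closure rather than your closed-in-open, which makes no difference), uses that $\tilD(U)^A$ vanishes after completion at $a$ because $U^a=\emptyset$, and concludes from the $j_*j^*$ and $j_!j^*$ triangles that $\eta^!\to\eta^*$ becomes an isomorphism. The differences are that you spell out the vanishing step the paper calls ``easy to see'', and one harmless slip: the cone of $i^!\to i^*$ is $i^*j_*j^*$, while $i^*j_*j^*[-1]$ is its fiber.
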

\begin{proof}
  By replacing $Z$ with its closure we can assume that $Z$ is closed (since $Z$ is open in its closure and since for an open embedding the
  two restriction functors obviously coincide). Let $U=Y\backslash Z$ and let $j$ be the embedding of $U$ into $Y$. It is easy to see that
  since $U^a=\emptyset$ it follows that $\tilD(U)^A$ vanishes after completion at $a$. Hence for every $\calF\in \tilD(U)^A$ the image of
  both $j_*\calF$ and $j_!\calF$ in $(\tilD(Y)^A)^\wedge_a$ is~$0$. This shows that for every $\calG\in \tilD(Y)^A$ the maps
  $\eta_!\eta^!\calG\to \calG$ and $\calG\to \eta_*\eta^*\calG$ become isomorphisms after completion at $a$, which shows that $\eta^!$
  and $\eta^*$ become isomorphic after completion at $a$.
\end{proof}

Now let us choose a one-parametric subgroup $\BG_m\to A$ such that $Y^{\BG_m}=Y^a$.
\begin{cor}\label{braden}
  Let $\Phi\colon \tilD(Y)^A\to \tilD(Y^a)^A$ denote the hyperbolic restriction functor with respect to the above
  action of $\BG_m$ (we recall the definition below) and let $i$ as before be the embedding of $Y^a$ into $Y$.
  Then $\Phi$ becomes isomorphic to $i^*$ and $i^!$ after completion at $a$.
\end{cor}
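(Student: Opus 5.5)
Recall the definition of hyperbolic restriction. Let $Y^+\subset Y$ be the attracting locus of the chosen $\BG_m$-action, i.e.\ the points $y$ for which $\lim_{t\to 0}t\cdot y$ exists. Let $p\colon Y^+\to Y^a=Y^{\BG_m}$ be the limit map and $\eta^+\colon Y^+\to Y$ the inclusion. Then $\Phi=p_*(\eta^+)^!$; equivalently, by Braden's theorem on $\BG_m$-monodromic objects, $\Phi\simeq p^-_!(\eta^-)^*$ for the repelling locus $Y^-$. Since $A$ is commutative, $Y^\pm$ are $A$-invariant and everything is $A$-equivariant and $\Sym(\gra^*)$-linear, so it makes sense to complete at $a$. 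We also have $i=\eta^+\circ s$, where $s\colon Y^a\to Y^+$ is the zero section, and $p\circ s=\Id$.

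The plan is to reduce everything to the preceding Lemma, applied twice. First, $Y^a\subset Y^+$, and $Y^+$ is locally closed in $Y$ (for affine, or more generally normal quasi-projective $Y$, by Białynicki-Birula; in general, work locally). The Lemma applied to $\eta^+\colon Y^+\to Y$ gives $(\eta^+)^!\simeq(\eta^+)^*$ after completion at $a$. Second, apply the same localization argument inside $Y^+$ to the closed embedding $s\colon Y^a\to Y^+$. Its complement $U=Y^+\setminus Y^a$ has no $a$-fixed points, so $\tilD(U)^A$ vanishes after completion at $a$. Exactly as in the proof of the Lemma, the adjunction map $\calG\to s_*s^*\calG$ becomes an isomorphism for every $\calG\in\tilD(Y^+)^A$.

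Apply $p_*$ to this isomorphism. Since $p\circ s=\Id$, this gives $p_*\calG\simeq s^*\calG$ after completion. Taking $\calG=(\eta^+)^!\calF$, we get
$$\Phi(\calF)=p_*(\eta^+)^!\calF\simeq s^*(\eta^+)^!\calF\simeq s^*(\eta^+)^*\calF=i^*\calF.$$
The isomorphism $i^*\simeq i^!$ is Theorem~\ref{loc}(2), or equivalently the Lemma applied to $Z=Y^a$. Symmetrically, using $s^!$ and $p_!$ on $Y^-$ gives $\Phi\simeq i^!$ directly. One should check that the resulting maps coincide with the canonical ones, namely $i^!=s^!(\eta^+)^!\to s^*(\eta^+)^!\to p_*(\eta^+)^!$. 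This is formal.

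The main subtlety is the step $p_*\calG\simeq s^*\calG$. One must check that $p_*$ commutes with the completion functor, i.e.\ that it is $\Sym(\gra^*)$-linear, which holds because $p$ is $A$-equivariant, so that a vanishing statement on $U$ transfers. Since completion is defined as $-\otimes_{R\mod}R\mod^\wedge_a$, any $R$-linear functor passes to completions, so the vanishing of $p_*$ of objects coming from $U$ follows from the vanishing of $\tilD(U)^A$ itself. This avoids any contraction-lemma argument, which would not apply anyway since the objects are not assumed to be $\BG_m$-monodromic in a naive sense. If $Y^+$ fails to be locally closed, I would first reduce to the affine, or $A$-stable affine open cover, case, where Białynicki-Birula applies.
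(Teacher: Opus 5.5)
Your argument is correct and is essentially the paper's. In both, the key step is to apply the preceding Lemma to the attractor $Y^+\supset Y^a$, replacing $(\eta^+)^!$ by $(\eta^+)^*$ after completion; this yields $i^*$, and $i^*\simeq i^!$ then comes from Theorem~\ref{loc}(2).

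The only difference is the starting definition of $\Phi$. The paper defines $\Phi$ directly as $s^*(\eta^+)^!$, tacitly using the contraction lemma $p_*\simeq s^*$. You start from $p_*(\eta^+)^!$ and recover $p_*\simeq s^*$ after completion by a second localization argument on $Y^+$, which is also valid. Your aside that the contraction lemma would not apply is mistaken: $A$-equivariant objects are equivariant for the cocharacter $\BG_m\to A$, so it does apply.
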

\begin{proof}
  Let $Z\subset Y$ be the attractor of $\BG_m$ to $Y^a$. The hyperbolic restriction is defined as $!$-restriction to
  $Z$ followed by $*$-restriction to $Y^a$. Since $Z$ contains $Y^a$ we can replace the first $!$-restriction by
  $*$-restriction after completion at $a$ and the corollary follows.
\end{proof}

\subsection{A variant}
\label{variant}
Assume now that $Y$ is acted on by a reductive group $\GG$ with maximal torus $A$.\footnote{In applications $\GG$ will usually be $G\times \BC^{\times}$.}
Let $a$ be a regular element of $\gra$. We have a natural functor $(\tilD(Y)^{\GG})^{\BZ/2}\to (\tilD(Y)^A)^{\BZ/2}$; the first category is linear over $\Sym(\gra^*)^W$ where $W$ is the Weyl group of
$\GG$, while the second is linear over $\Sym(\gra^*)$. Hence it makes sense to take the completion of both at our $a\in \gra$. It is easy to see that the above functor induces an equivalence
between $((D(Y)^\GG)^{\BZ/2})^\wedge_a$ and $((D(Y)^A)^{\BZ/2})^\wedge_a$.\footnote{It is important here that $a$ is assumed to be regular.}
Similar statement holds when completion at $a$ is replaced by fiber at $a$.

\section{Proof of Theorem \ref{main}}\label{idea}
\subsection{Idea of the proof}
\label{idea of the proof}
To simplify the discussion we shall only deal with the untwisted case; the twisted case is obtained similarly. Also, let us concentrate on proving the first assertion of Theorem \ref{main}. The 2nd assertion is proved completely analogously if one replaces all completions below by the corresponding fibers.

The idea of the proof is the following. Let us pick a regular dominant integral coweight $\lambda$ of $G$. Let $A=\sT\times \BC^{\times}$.
 We would like to take the equivalence of Conjecture \ref{bzsv} and apply to both sides the composition of the following~3 operations:

 1) Completion at $a=(\lambda,1)$.

 2) Restriction to the point 1 in the second summand of $\gra=\grt\oplus \BC$. We get a category linear over $\Sym(\grt^*)$.

 3) Shift by $\lambda$ in $\grt$ (i.e.\ this is just the identity operation on the level of abstract categories, but we change the action of $\Sym(\grt^*)$ via the pullback under the automorphism $x\mapsto x+\lambda$ of $\ft$).

\smallskip
\noindent
We claim that after applying these~3 functors to both sides of Conjecture \ref{bzsv} we get the two sides in Theorem \ref{main}(1).
Let us explain this for the right hand side. First, it is clear that the first two operations commute.
Applying~2) to the category $((Q_{\hbar}(\fX^{\vee})\mod)^{G^{\vee}})^{\BZ/2}$, we get the category
$((Q_{\hbar=1}(\fX^{\vee})\mod)^{G^{\vee}})^{\BZ/2}$. But weakly $G^\vee$-equivariant modules over the quantization
$Q_{\hbar=1}(\fX^\vee)$ are nothing but strongly $G^\vee$-equivariant modules over $Q_{\hbar=1}(\fX^\vee)\otimes U_{\hbar=1}(\fg^\svee)$.
Namely, given a weakly $G^\vee$-equivariant $Q_{\hbar=1}(\fX^\vee)$-module $M$, there are two $\fg^\svee$-actions on $M$.
One $b_1(x)$, $x\in\fg^\svee$, is the differential of the $G^\vee$-action on $M$. Another $b_0(x)$ comes from the homomorphism
$U_{\hbar=1}(\fg^\svee)\to Q_{\hbar=1}(\fX^\vee)\to\End(M)$. Then $\beta(x):=b_1(x)-b_0(x)$ gives rise to a homomorphism from
$U_{\hbar=1}(\fg^\svee)$ to $\End_{Q_{\hbar=1}}(M)$.

The center $ZU_{\hbar=1}(\fg^\svee)$ acts on the category
$((Q_{\hbar=1}(\fX^{\vee})\mod)^{G^{\vee}})^{\BZ/2}\cong\big(((Q_{\hbar=1}(\fX^\vee)\otimes U_{\hbar=1}(\fg^\svee))\mod)^{G^{\vee},\on{strong}}\big)^{\BZ/2}$
via the embedding into the 2nd factor.
Using the derived Beilinson-Bernstein localization theorem we identify the completion at $\lambda$ of
$((Q_{\hbar=1}(\fX^{\vee})\mod)^{G^{\vee}})^{\BZ/2}\cong\big(((Q_{\hbar=1}(\fX^\vee)\otimes U_{\hbar=1}(\fg^\svee))\mod)^{G^{\vee},\on{strong}}\big)^{\BZ/2}$
with the category
$\big(((Q_{\hbar=1}(\fX^\vee)\otimes\CalD(G^\vee/N^\vee)\mod^{\widehat{\Lambda}})^{G^{\vee},\on{strong}}\big)^{\BZ/2}$
of strongly $G^\vee$-equivariant modules over $Q_{\hbar=1}(\fX^\vee)\otimes\CalD(G^\vee/N^\vee)$ that are unipotent monodromic
on the 2nd factor.

In more detail, we consider the category of $\CalD(G^\vee/N^\vee)$-modules
(sheaves of modules over differential operators on $G^\vee/N^\vee$)
such that the action of $\BC[\ft]$ (differential operators arising from the infinitesimal right action of $\sT^\vee$ on $G^\vee/B^\vee$)
has generalized eigenvalues in the weight lattice $\Lambda$ of $\sT^\vee$.
We denote this category by $\CalD(G^\vee/N^\vee)\mod^{\widehat{\Lambda}}$.
For $M\in\CalD(G^\vee/N^\vee)\mod^{\widehat{\Lambda}}$, we denote by $R\Gamma^{\widehat{\lambda}}(G^\vee/N^\vee,M)$
the maximal direct summand of the global sections on which
$\BC[\ft]$ acts (via infinitesimal right action of $\sT^\vee$ on $G^\vee/N^\vee$) with the generalized eigenvalue $\lambda$.
The functor
\begin{equation}
  \label{domi}
  R\Gamma^{\widehat{\lambda}}(G^\vee/N^\vee,?)\colon
  \CalD(G^\vee/N^\vee)\mod^{\widehat{\Lambda}}\to U_{\hbar=1}(\fg^\svee)\mod^{\widehat{\lambda}}
\end{equation}
is a $t$-exact equivalence of categories since $\lambda+\rho$ is regular dominant.

\begin{rem} Taking the fiber (as opposed to taking completion) at $\lambda$ corresponds
to taking $B^\vee$-equivariant modules over $Q_{\hbar=1}(\fX^\vee)$ with the same proof -- this is needed in order to prove Theorem \ref{main}(2).
\end{rem}

\begin{rem} A priori in this way we construct an equivalence claimed by Theorem \ref{main} which depends on the choice of $\lambda$.
  However, we will show in~\S\ref{indep} that it is in fact independent of $\lambda$.
\end{rem}

Thus it remains to prove the following:
\begin{thm}\label{B-equiv}
  The composition of 1, 2 and 3 above applied to the category $(\tilD(\fY(\CK))^{G(\CO)\rtimes \mathbb C^\times})^{\BZ/2}$ is equivalent to
  $(D(\fY)^B)^{\BZ/2}$.
\end{thm}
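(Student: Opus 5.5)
Here is the plan. Set $A=\sT\times\BC^\times\subset G(\CO)\rtimes\BC^\times$ and $a=(\lambda,1)\in\gra$, with $\lambda$ regular dominant. First we reduce the group of equivariance. Following the variant of \S\ref{variant}, applied to the pro-group $G(\CO)\rtimes\BC^\times$ (whose reductive quotient is $G\times\BC^\times$, with the prounipotent radical contributing nothing to equivariant cohomology), completion at the regular element $a$ identifies
$$\big((\tilD(\fY(\CK))^{G(\CO)\rtimes\BC^\times})^{\BZ/2}\big)^\wedge_a \quad\text{with}\quad \big((\tilD(\fY(\CK))^{I\rtimes\BC^\times})^{\BZ/2}\big)^\wedge_a,$$
where $I$ is the Iwahori subgroup. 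Equivalently, it is the completion of $A$-equivariant objects equipped with the remaining unipotent equivariance. To make sense of this on the ind-scheme $\fY(\CK)$, we write $\fY(\CK)$ as a colimit of $I\rtimes\BC^\times$-stable finite-dimensional pieces, using truncations modulo congruence subgroups. We then apply Theorem~\ref{loc} and Corollary~\ref{braden} piece by piece; the statements are compatible with the transition $*$-pushforwards.

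Next we identify the fixed locus. A point of $\fY(\CK)$ is fixed by $a=(\lambda,1)$ precisely when loop rotation acts on it through the cocharacter $-\lambda$ of $\sT$, i.e.\ it is a loop of the form $t\mapsto t^{\lambda}\cdot y$ with $y\in\fY$. So $\fY(\CK)^a\cong\fY$, embedded via $y\mapsto t^{\lambda}y$; we check that this is a closed component-wise embedding into the relevant finite pieces. By Theorem~\ref{loc}(1)--(3), $i^*\simeq i^!$ after completion, and restriction gives a fully faithful functor from the completed category into $(D(\fY)^{\on{Stab}})^{\BZ/2}$, up to the shift by $a$. Here $\on{Stab}$ is the centralizer of $a$ in $I\rtimes\BC^\times$, acting on $\fY(\CK)^a$ after the shift. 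Because $\lambda$ is regular dominant, conjugation by $t^{\lambda}$ carries $I$ to a group whose intersection with the $a$-fixed part is $B$ (times the torus $\BC^\times$). The negative root directions of $I$ are contracted, while the positive ones survive as constants.

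Then we perform operations 2) and 3). Restricting to $\hbar=1$ on the $\BC$-factor and shifting by $\lambda$ identifies the target with $(D(\fY)^B)^{\BZ/2}$, completed at $0\in\ft$. By the remark at the start of \S\ref{completion}, this completion is $(D(\fY)^B)^{\BZ/2}$ itself. We must also check that after specializing $\hbar=1$ the extra $\BC^\times$ direction disappears. The point is that the $\BC^\times$-equivariance on the fixed locus is identified with the $\sT$-direction through $\lambda$, so it is absorbed.

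Finally, essential surjectivity. The fixed locus $\fY\hookrightarrow\fY(\CK)$ is the image of $y\mapsto t^{\lambda}y$, so $*$-pushforward from it gives objects of the source category, and $i^*i_*\simeq\mathrm{id}$. Together with full faithfulness this gives the equivalence. The step I expect to be the main obstacle is justifying localization on the infinite-dimensional ind-scheme $\fY(\CK)$ with its renormalized category. Concretely, we need that completion at $a$ kills everything supported off $\fY(\CK)^a$ uniformly in the colimit, and that the fixed-point stabilizer is exactly $B$ with connected pro-unipotent remainder. I would try to handle this by stratifying $\fY(\CK)$ into $I$-orbits of finite codimension using sphericity (finitely many $B$-orbits, hence a countable orbit stratification). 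On each stratum, $U^a=\emptyset$ forces the vanishing, as in the proof of the Lemma after Theorem~\ref{loc}; we then pass to the colimit using compact generation.
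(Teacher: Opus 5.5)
Your first half essentially matches the paper. After completion at the regular point $a=(\lambda,1)$, restriction to the fixed locus $\fY(\CK)^a\simeq\fY$ is fully faithful by Theorem~\ref{loc} and \S\ref{variant}. The pro-unipotent part of the group contributes only a fully faithful forgetful functor, and the result lands in equivariant objects for the centralizer $\on{Stab}\cong B\rtimes\BC^\times$. Specializing $\hbar=1$ and then shifting by $\lambda$ works as you describe.

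The gap is essential surjectivity. You argue that $i_*F$ lies in the source category and that $i^*i_*\simeq\mathrm{id}$. But the fixed locus $t^{\lambda}\fY$ ($t^{-\lambda}\fY$ in the paper's normalization) is stable only under $\on{Stab}$, not under $I$ or $G(\CO)$. So $i_*F$ carries no $I\rtimes\BC^\times$-equivariant structure and is not an object of the source category.

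The natural substitute is induction along the action map $(G(\CO)\rtimes\BC^\times)\times^{\on{Stab}}t^{\lambda}\fY\to\fY(\CK)$. However, $i^*\on{Ind}F\not\simeq F$ in general. A single $G(\CO)$-orbit can meet the fixed locus in several $B$-orbits: the $\SL(3)$ example in the Remark after Lemma~\ref{B-orbit} gives two Schubert cells. These extra pieces lie in the $a$-fixed locus, so they survive completion at $a$. Which objects of $D(\fY)^{B\rtimes\BC^\times}$ lie in the essential image is therefore governed by how $G(\CO)$-orbits meet the fixed locus, and your proposal never analyzes this.

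The ingredient you are missing is exactly this orbitwise analysis, which the paper supplies with two lemmas.
\begin{enumerate}
\item Lemma~\ref{B-orbit}: for a homogeneous space of $H=F\rtimes T$, each connected component of the $T$-fixed locus is a single $(H^T)^\circ$-orbit. Hence every $G(\CO)$-orbit meets $t^{\mu}\fY$ in a \emph{disjoint} union of $B$-orbits, and every $G(\CO)\rtimes\BC^\times$-orbit meeting $t^\mu\fY$ is a single $G(\CO)$-orbit.
\item Lemma~\ref{tim}: all $B$-stabilizers in $\fY$ are connected.
\end{enumerate}
Together these let one compare, orbit by orbit, the completed equivariant category of a $G(\CO)$-orbit with the $B\rtimes\BC^\times$-equivariant category of its intersection with the fixed locus. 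This identifies the essential image with $(D(\fY)^{B\rtimes\BC^\times})^{\BZ/2}$.

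The ``main obstacle'' you flag, localization on the ind-scheme, is a technical point rather than the substantive step. If you prefer to stay closer to your own route, you could try to show the following: after completion, $F$ splits off $i^*\on{Ind}F$ as the contribution of the open-and-closed component of the fixed locus through the identity coset. You would then use that a fully faithful functor out of an idempotent-complete category has essential image closed under retracts. Either way, some argument of this kind has to be supplied.
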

\subsection{Proof of Theorem \ref{B-equiv}}
To prove Theorem \ref{B-equiv} we need two preparatory lemmas.
Let $\fY$ be a spherical $G$-variety, and let $\fX=T^*_\psi\fY$ be its twisted cotangent bundle. If $\fX$ is
hyperspherical~\cite[\S3.5.1]{bzsv}, then according to~\cite[Proposition 3.7.4.(b)]{bzsv}, the $B$-stabilizers of
points in the open $B$-orbit in $\fY$ are connected.

\begin{lem}[D.~Timashev]
  \label{tim}
  If the $B$-stabilizers in the open $B$-orbit in a spherical $G$-variety $\fY$ are connected, then the
  $B$-stabilizer of any point $y\in\fY$ is connected.
\end{lem}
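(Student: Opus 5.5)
The plan is to turn connectedness of $B_y$ into a lattice-theoretic statement about the $B$-orbit $\CO=By$, and then propagate that statement from the open $B$-orbit to all $B$-orbits. We work over $\BC$, where unipotent groups are connected. Fix a maximal torus $\sT\subset B$ with unipotent radical $U$. The group $B_y$ is solvable, so $B_y=(B_y)_u\rtimes S_y$ with $(B_y)_u$ unipotent (hence connected) and $S_y$ diagonalizable, conjugate into $\sT$. Thus $B_y$ is connected iff $S_y$ is a torus, i.e.\ iff the character group $X(S_y)$ is torsion free. Since $X(S_y)=X(\sT)/S_y^\perp$, this holds iff $S_y^\perp$ is a saturated sublattice of $X(\sT)$.

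The first step is to identify $S_y^\perp$ with the weight lattice $\Lambda(\CO)$ of $B$-semi-invariant rational functions on $\CO=B/B_y$. Indeed, a character $\chi$ of $B$ gives a $B$-eigenfunction on $B/B_y$ iff $\chi|_{B_y}=1$, iff $\chi|_{S_y}=1$. So the statement becomes: if $\Lambda(\CO_0)$ is saturated in $X(\sT)$ for the open $B$-orbit $\CO_0\subset\fY$, then $\Lambda(\CO)$ is saturated for every $B$-orbit $\CO$.

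The second step is to reduce to $B$-orbits that are open in a $G$-orbit. For a $G$-orbit $\fY'\subset\fY$ I would use the Luna--Vust/Knop description of $G$-orbits in the (affine, hence simple after localization) spherical variety. The weight lattice of $\fY'$ equals $\Lambda(\fY)\cap F^\perp$ for the face $F$ of the valuation cone (or of the colored cone) attached to $\fY'$. An intersection of a saturated lattice with a rational subspace is again saturated.

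The third step passes from the open $B$-orbit of each $G$-orbit to the smaller $B$-orbits inside it, using Knop's minimal-parabolic calculus. Every $B$-orbit $\CO$ that is not open in its $G$-orbit satisfies $\CO\subset P_\alpha\CO$ with $P_\alpha\CO\neq\CO$ for some simple root $\alpha$. Let $\CO'$ denote the open $B$-orbit in $P_\alpha\CO$. Knop's analysis of the $P_\alpha$-action on $P_\alpha\CO'/U_\alpha$ (the types $G$, $U$, $T$, $N$, according as the image in $\PGL_2$ of the stabilizer is all of $\PGL_2$, contains $U_\alpha$, is a torus, or is the normalizer of a torus) expresses $\Lambda(\CO)$ in terms of $\Lambda(\CO')$:
\begin{enumerate}
\item in type $U$ we have $\Lambda(\CO)=s_\alpha\Lambda(\CO')$ or $\Lambda(\CO)=\Lambda(\CO')$;
\item in type $T$ we have $\Lambda(\CO)=\Lambda(\CO')\cap\alpha^{\vee\perp}$ or $\Lambda(\CO)=\Lambda(\CO')$;
\item in type $N$ the same kind of description holds up to an index-$2$ modification.
\end{enumerate}
Applying $s_\alpha$ or intersecting with a hyperplane preserves saturation. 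So induction on codimension of orbits gives the result, except possibly in type $N$.

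The type $N$ case is the main obstacle, because the normalizer of a torus in $\PGL_2$ is disconnected. There I would compute directly in the rank-one model: the $B$-orbits of $P_\alpha/N(\sT_\alpha)$, or of its $\SL_2$-cover. I would check that the relevant stabilizer component group comes from $N(\sT_\alpha)/\sT_\alpha$, which is not contained in $B$. Hence the $B$-stabilizer of a point in the closed orbit $\CO$ is $B\cap(\text{stabilizer})$, and its image in $\PGL_2$ is connected. If this local check fails, I would fall back on a deformation argument: degenerate $y$ to the open orbit along a $\BG_m$ in $\sT$, and use that component groups of stabilizers are upper semicontinuous in the smooth family of stabilizer groups over the normalization of the orbit closure.
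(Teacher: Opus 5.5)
Your overall plan is the paper's. Steps 1 and 2 are correct, and your identity $\Lambda(\fY')=\Lambda(\fY)\cap F^\perp$ is the same saturation fact the paper extracts from Knop's extension theorem for $B$-eigenfunctions. The trouble is Step 3. There the paper just cites Knop's result that $\pi_0$ of the $B$-stabilizer on a smaller $B$-orbit is a subquotient of $\pi_0$ on a larger one. You try to replace this by rank-one lattice formulas, and two of them fail.

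\emph{Type $T$.} The formula is false. Take $G=\GL_2$ and $\fY=\GL_2/T_1$ with $T_1=\{\mathrm{diag}(t,1)\}$; this is type $T$ for the simple root.
\begin{itemize}
\item The generic $B$-stabilizer is trivial, so $\Lambda(\CO')=\BZ\epsilon_1\oplus\BZ\epsilon_2$.
\item The $B$-orbit of $eT_1$ has stabilizer $T_1$, so $\Lambda(\CO)=\BZ\epsilon_2$.
\item This is neither $\Lambda(\CO')$ nor $\Lambda(\CO')\cap\alpha^{\vee\perp}=\BZ(\epsilon_1+\epsilon_2)$.
\end{itemize}
The correct relation is $\Lambda(\CO')=\Lambda(\CO)\oplus\BZ\alpha$. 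Saturation still descends, but by a different argument: if $n\chi\in\Lambda(\CO)$, write $\chi=\lambda+m\alpha$ with $\lambda\in\Lambda(\CO)$, and use $\alpha\notin\Lambda(\CO)\otimes\BQ$ to get $m=0$.

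\emph{Type $N$.} Your argument does not close. Knowing that the image of $B_x$ in $\PGL_2$ is connected says nothing about the kernel of $(P_\alpha)_x\to\PGL_2$, which is exactly where a component group can sit. The semicontinuity fallback is also not a valid principle, since the stabilizers do not form a smooth family. The missing observation is that type $N$ never occurs under your induction hypothesis. For $x'\in\CO'$ one has $B_{x'}=\on{Stab}_{(P_\alpha)_{x'}}([B])$. This is the preimage of the stabilizer in $H=N(T)\subset\PGL_2$ of a point of the open $H$-orbit in $\BP^1=P_\alpha/B$, and that stabilizer is $\BZ/2$. So $B_{x'}$ surjects onto $\BZ/2$ and is disconnected; for instance, in $\PGL_2/N(T)$ one gets $\Lambda(\CO')=2\BZ\alpha$.

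\emph{Repair.} Argue with groups rather than lattices; this is what Knop's statement amounts to. In types $U$ and $T$:
\begin{itemize}
\item For $x\in\CO$, the point $[B]$ lies in a lower $H$-orbit, which is an $H$-fixed point. Hence $B_x=(P_\alpha)_x$.
\item For $x'=px\in\CO'$, the conjugate $p^{-1}B_{x'}p$ is the stabilizer in $(P_\alpha)_x$ of the point $p^{-1}[B]$ of the open $H$-orbit.
\item The quotient is that open orbit, $\BA^1$ or $\BG_m$, which is connected.
\end{itemize}
Therefore $\pi_0(B_{x'})\twoheadrightarrow\pi_0(B_x)$. Combined with the exclusion of type $N$, your induction then goes through.
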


\begin{proof}
First we consider the closure $\fY'$ of a $G$-orbit in $\fY$, and assume that the $B$-stabilizer of a general
point in $\fY'$ is connected. Then it follows that the $B$-stabilizer of any point $y'\in\fY'$ is connected.
Indeed, according to~\cite[Corollary 3.3]{k2}, the group of connected components $\on{Stab}_B(y')/\on{Stab}_B^0(y')$
is a subquotient of the similar group $\on{Stab}_B(y'')/\on{Stab}_B^0(y'')$ for $y''\in\fY'$ lying in a bigger
$B$-orbit. By induction in the set of $B$-orbits in $\fY'$ with its adjacency order, we see that
$\on{Stab}_B(y')/\on{Stab}_B^0(y')$ is a subquotient of $\on{Stab}_B(y'')/\on{Stab}_B^0(y'')$ for $y''$ in
the open $B$-orbit in $\fY'$. But the latter group is trivial by our assumption.

It remains to study the $B$-stabilizers of points in $B$-orbits open in their $G$-orbits in $\fY$. For a point
$y$ in such a $B$-orbit, the group $\on{Stab}_B(y)/\on{Stab}_B^0(y)$ is trivial iff the lattice $\Lambda_{\fY'}$
is saturated in $\Lambda$. Here $\Lambda$ is the lattice of weights of $B$ (i.e.\ $\Lambda=X^*(\sT)$),
and $\Lambda_{\fY'}$ is the lattice of weights of $B$-semi-invariant rational functions on the $G$-orbit
$\fY'=G.y$. According to~\cite[Theorem 1.3.(b)]{k1}, any $B$-semi-invariant rational fuction on $\fY'$
extends to a $B$-semi-invariant rational function on the whole of $\fY$. Clearly, the $B$-weights of the function
and its extension coincide. The resulting rational functions on $\fY$ can be characterized as $B$-semi-invariant
rational functions without poles or zeros at the $B$-invariant irreducible divisors in $\fY$ containing $\fY'$.
It follows that $\Lambda_{\fY'}$ is saturated in the lattice of $B$-weights of $B$-semi-invariant rational
functions on $\fY$. Hence the triviality of $\on{Stab}_B(y)/\on{Stab}_B^0(y)$ for general $y\in\fY'$ follows
from the triviality of $\on{Stab}_B(y)/\on{Stab}_B^0(y)$ for general $y\in\fY$.

The lemma is proved.
\end{proof}

\begin{lem}
  \label{B-orbit}
  \begin{enumerate}
  \item Every $G(\CO)\rtimes \mathbb C^\times$-orbit in $\fY(K)$ that intersects $t^\mu\cdot\fY$ for some integral coweight
    $\lambda$ is in fact a single $G(\CO)$-orbit.
  \item
  Given a $G(\CO)$-orbit $\BO$ in $\fY(\CK)$, for a regular antidominant coweight $\mu$,
  the intersection $\BO\cap(t^\mu\cdot\fY)$ is a disjoint union of $B$-orbits in $\fY$.
  \end{enumerate}
\end{lem}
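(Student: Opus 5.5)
For (1), I would use the loop rotation action directly. An element $s\in\BC^\times$ acts on $\fY(\CK)$ by $\gamma(t)\mapsto\gamma(st)$. For $y\in\fY$ and an integral coweight $\mu$, we have
\[
s\cdot(t^\mu y)=(st)^\mu y=\mu(s)\,t^\mu y ,
\]
and $\mu(s)\in\sT\subset G(\CO)$. So the $\BC^\times$-orbit of $t^\mu y$ lies inside $G(\CO)\cdot t^\mu y$. Since $G(\CO)$ is normalized by loop rotation, the $G(\CO)\rtimes\BC^\times$-orbit of $t^\mu y$ is $G(\CO)\cdot\BC^\times\cdot t^\mu y = G(\CO)\cdot t^\mu y$, which is a single $G(\CO)$-orbit. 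The same computation works for any $\mu$ if we read the statement with the coweight called $\mu$ rather than $\lambda$.

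For (2), I would first observe that $t^\mu\cdot\fY$ is $B$-stable. Indeed, for $b\in B$ we have $b\,t^\mu=t^\mu\,(t^{-\mu}bt^{\mu})$. Because $\mu$ is antidominant, $t^{-\mu}Bt^{\mu}\subset B(\CO)$, and its reduction mod $t$ is the torus part of $b$. Hence $\BO\cap t^\mu\fY$ is $B$-stable.

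The real content is that it is a union of $B$-orbits. Any $B$-stable subset of $t^\mu\fY\cong\fY$ is a union of $B$-orbits, and there are finitely many of these since $\fY$ is spherical. The word ``disjoint'' is then automatic, but what I think is intended is that the intersection is a finite union of $B$-orbits which is open and closed in each stratum. The way to justify this is to compare with the $\BG_m$-action via $\mu$ combined with loop rotation. The locus $t^\mu\fY$ is the fixed locus in $\fY(\CK)$ of the one-parameter subgroup $s\mapsto(\mu(s)^{-1},s)$ of $G(\CO)\rtimes\BC^\times$. Its centralizer in $G(\CO)\rtimes\BC^\times$ contains $B$ only up to its $\sT$-part, so I would use instead the $B$-action described above.

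The main point I would then prove is the following claim: for $y,y'\in\fY$ with $t^\mu y$ and $t^\mu y'$ in the same $G(\CO)$-orbit, the points $y$ and $y'$ lie in the same $B$-orbit, provided $\mu$ is regular antidominant and sufficiently deep. Plan: write $t^\mu y'=g\,t^\mu y$ with $g\in G(\CO)$, and use the Iwahori decomposition $G(\CO)=\bigsqcup_w I w I$ together with the fact that for regular antidominant $\mu$ the conjugate $t^{-\mu}It^{\mu}$ contracts $N^-$-directions. Reducing mod $t$, this forces $g$ to effectively lie in $B(\CO)$ acting through $B$ on the constant term. Here the connectedness of stabilizers from Lemma~\ref{tim} is used to rule out monodromy that could split an orbit.

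The main obstacle is exactly this claim. I would attack it by a limit argument: consider $\BG_m$ acting through $\mu$ composed with loop rotation, and take $s\to0$ limits along the attractor, as in Corollary~\ref{braden}, to reduce $g$ into $B(\CO)$.
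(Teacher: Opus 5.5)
Your proof of (1) is fine: $s\cdot t^\mu y=\mu(s)\,t^\mu y$ with $\mu(s)\in\sT\subset G(\CO)$. Part (2), however, has a genuine gap.

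First, the constant group $B\subset G(\CO)$ does not preserve $t^\mu\fY$. In $b\,t^\mu y=t^\mu\,(t^{-\mu}bt^{\mu})\cdot y$, the element $t^{-\mu}bt^{\mu}\in B(\CO)$ moves the constant arc $y$ to a non-constant arc, whatever its reduction mod $t$ is. For example, take $\SL(2)$ acting on $\BC^2$ and $\mu(t)=\mathrm{diag}(t^{-1},t)$: an upper unipotent element with entry $c$ sends $(t^{-1}x,ty)$ to $(t^{-1}(x+ct^2y),ty)$. You correctly identified $t^\mu\fY$ as the fixed locus of $\gamma(s)=(\mu(s)^{-1},s)$, but then discarded its centralizer. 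In fact $G(\CO)^\gamma=t^\mu B^-t^{-\mu}$, which acts on $t^\mu\fY\cong\fY$ through the opposite Borel $B^-$. With your sign conventions, this is the group whose orbits (2) is about.

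More seriously, the claim you set out to prove is false. You claim that for deep regular antidominant $\mu$, if $t^\mu y$ and $t^\mu y'$ lie in one $G(\CO)$-orbit, then $y,y'$ lie in one Borel orbit. This is exactly the ``tempting conjecture'' refuted in the Remark following the lemma. Take $G=\SL(3)\times\SL(3)$ acting on $\fY=\SL(3)$ by $(g_1,g_2)\cdot g=g_1gg_2^{-1}$, with coweight $(-N\rho,-N\rho)$. The points $t^{-N\rho}s_1t^{N\rho}$ and $t^{-N\rho}s_2t^{N\rho}$ both lie in $\SL(3,\CO)\,t^{N\rho}\,\SL(3,\CO)$ for every $N$, although $s_1$ and $s_2$ lie in different Bruhat cells. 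So no Iwahori-decomposition or attractor-limit argument can prove your claim. Connectedness of stabilizers is also irrelevant here: Lemma~\ref{tim} is used only later, when the completed category is identified with $B$-equivariant sheaves. The word ``disjoint'' in (2) carries the real content: several orbits may occur, but each is open and closed in $\BO\cap t^\mu\fY$.

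The missing idea is a tangent-space computation for torus-fixed points of a homogeneous space. Suppose $H=F\rtimes S$, with $S$ a torus, acts transitively on $X$, and let $x\in X^S$. Since taking $S$-invariants is exact,
\[
T_x(X^S)=(\mathfrak h/\mathfrak h_x)^S=\mathfrak h^S/\mathfrak h_x^S=T_x\bigl((H^S)^\circ\cdot x\bigr).
\]
Hence every $(H^S)^\circ$-orbit is open in $X^S$, and each connected component of $X^S$ is a single orbit.

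Apply this with $F=G(\CO)$, $S=\gamma(\BC^\times)$ and $X=\BO$; by part (1), $\BO$ is $S$-stable as soon as it meets $t^\mu\fY$. Then $X^S=\BO\cap t^\mu\fY$, and $H^S=t^\mu B^-t^{-\mu}\times S$ is connected. Since $S$ acts trivially on $X^S$, the orbits in question are Borel orbits in $\fY$. This is the paper's proof; no analysis of individual $G(\CO)$-orbits is needed.
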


\begin{proof}
Let $H=F\rtimes T$ be a semidirect product of a torus with an algebraic group. Let $X$ be an
$H$-homogeneous space. Then any connected component $X^T_i$ of the fixed point set $X^T$ is homogeneous
with respect to the action of $(H^T)^\circ$ (the neutral connected component of the fixed point
subgroup of the conjugation $T$-action on $H$).

Indeed, the tangent space $T_xX^T_i$ coincides with the tangent space
$T_x\left((H^T)^\circ.x\right)$ of
the $(H^T)^\circ$-orbit through $x$. All the tangent spaces $T_xX^T_i$, $x\in X^T_i$, are of the
same dimension, hence all the $(H^T)^\circ$-orbits in $X^T_i$ are of the same dimension, hence
$X^T_i$ must have a unique $(H^T)^\circ$-orbit (being connected).

We will apply the above claim to $F=G(\CO)$, and
$T=\BC^\times$ embedded via $(\mu,1)$ into the product of loop rotations with the Cartan torus of
$G$. Then $H^T=B\rtimes\BC^\times$. Furthermore, $X$ is a $G(\CO)$-orbit $\BO$ in $\fY(\CK)$. Then
$X^T=\BO\cap(t^\mu\cdot\fY)$.
\end{proof}

\begin{rem}
  It might be tempting to conjecture that if $\lambda$ is large enough then the intersection of a $G(\CO)$-orbit in
  $\fY(\CK)$ with $t^{-\lambda}\cdot\fY$ is always a single $B$-orbit (i.e.\ that for sufficiently large
  $\lambda$ one does not need to pass to connected components).
  However, this is wrong in general, as the following example shows.

  Consider the diagonal example $G=\SL(3)\times\SL(3)$, $\fY=\SL(3)$. The Weyl group $W=S_3$ is generated
  by simple reflections $s_1,s_2$. Set $\Gr=\SL(3,\CK)/\SL(3,\CO)$.
  One could expect the following. Let $\mu,\nu$ be big enough dominant regular
  coweights of $\SL(3)$. Let $\CB^\mu\subset\Gr$ be the $\SL(3)$-orbit of $t^\mu$ (isomorphic to
  the full flag variety of $\SL(3)$). Then the intersection of the shifted orbit $t^{-\nu}\CB^\mu$
  with any $\SL(3,\CO)$-orbit $\Gr^\theta\subset\Gr$ is exactly one Schubert cell in $t^{-\nu}\CB^\mu$.

  However, if we take $\mu=\nu=N\rho=(N,0,-N)=\theta$ for $N\gg0$, then the $\sT$-fixed points $t^{-\nu}\cdot t^{s_1\mu}$
  and $t^{-\nu}\cdot t^{s_2\mu}$ both lie in $\Gr^\theta$. Hence $t^{-\nu}\CB^\mu\cap\Gr^\theta$ consists
  of {\em two} Schubert cells. Note however that $t^{-\nu}\CB^\mu\cap\Gr^\theta$ is disconnected (a {\em disjoint}
  union of two Schubert cells).
\end{rem}

We can now finish the proof of Theorem \ref{B-equiv} (and hence the proof of Theorem~\ref{main}). As the first step, we consider
the completion of $(D(\fY(\CK))^{G(\CO)\rtimes\BC^\times})^{\BZ/2}$ at $(\lambda,1)$.
As $\lambda$ is assumed regular,
by Theorem \ref{loc} (and \S\ref{variant}) this completion is a full subcategory of
$(D(\fY(\CK)^{(t^\lambda,t)})^{\sT\times\BC^\times})^{\BZ/2}$ (where $\fY(\CK)^{(t^\lambda,t)}$ means fixed points with respect to the copy of
$\BC^{\times}$ in $\sT\times\BC^{\times}$ consisting of pairs $(t^{\lambda},t)$). However, this scheme of fixed points is
isomorphic to $\fY$ under the map $y\mapsto t^{-\lambda} y$ (i.e.\ the fixed points are equal to
$t^{-\lambda}\cdot\fY\subset \fY(\CK)$). It follows now from the combination of~Lemma~\ref{tim} and~Lemma~\ref{B-orbit}
that the above full subcategory is equal to $(D(\fY)^{B\rtimes\BC^{\times}})^{\BZ/2}$, where $\BC^{\times}$ acts on $\fY$ by the
cocharacter $-\lambda$ of $\sT$. It is easy to see that in this case, specializing $\hbar$ to 1 just gives the shift of
the category $(D(\fY)^B)^{\BZ/2}$ by $-\lambda$ over $\Sym(\ft^*)$. Shifting back by $\lambda$ gives the category
$(D(\fY)^B)^{\BZ/2}$.

\subsection{Proof of Theorem \ref{main'}}
The proof follows the same pattern as the proof of Theorem \ref{main}: we still apply the same procedures 1, 2 above for
$a=(\lambda,1)$.
In the right hand side we get the category $(Q_{\hbar=1}(\fX^{\vee})\mod^{B^{\vee}_{\mon,\lambda}})^{\BZ/2}$ for the same reason as before. Thus we need to prove an analog of Theorem \ref{B-equiv}. For this it is enough to identify the fixed points of
$a$ on $\fY(\calK)$ with $\fY^{\sT_\lambda}$ (and check the analog of Lemma \ref{B-orbit} which will be again a word-by-word repetition of the original case).

We consider the universal cover $\alpha\to \exp(2\pi \sqrt{-1} \alpha)$ of the multiplicative group.
For any $\lambda\in \grt$ and any point $y$ in $\fY$, an ``additive loop"
$\exp (-2\pi \sqrt{-1} \alpha \lambda)\cdot y$ is fixed under the action of the element $a\in \grt\oplus\BC$ for evident reasons.
The only problem is to make sure that the above ``additive loop" is an actual loop (i.e.\ factors through the above universal cover).
This happens if and only if the point $y$ is fixed under the element $\exp(2\pi \sqrt{-1} \lambda)$ of $\sT$ (i.e.\ under the Zariski
closure of the subgroup generated by this element). This way we get a map $\fY^{\sT_{\lambda}}\to \fY(\CK)^a$, and it is easy to see that
this map is an isomorphism.

\subsection{Examples}
Here we discuss some examples where Theorem \ref{bzsv} is known and thus our Theorem \ref{main} becomes unconditional.

\subsubsection{Jacquet-Shalika}\label{js}
$\fX=T^*(\GL(2n)/\Sp(2n))\circlearrowleft \GL(2n)$, and the dual variety
$\fX^\vee=T^*_\psi(\GL(2n)/(\GL(n)_{\mathrm{diag}}\ltimes\fgl(n)))$. In this example, Conjecture \ref{bzsv} was
proved in~\cite{cmno}.

\subsubsection{Rankin-Selberg, or mirabolic}\label{mir}
$\fX=T^*(\BC^N\times\GL(N))\circlearrowleft(\GL(N)\times\GL(N))$, and
$\fX^\vee=T^*\End(\BC^N)\circlearrowleft(\GL(N)\times\GL(N))$. In this example, Conjecture \ref{bzsv} was
proved in~\cite{bfgt}.

\subsubsection{$\GL(M|N)$}\label{glmn}
$\fX$ is the equivariant slice to a nilpotent orbit in $\fgl(N)$ of Jordan type $(N-M,1^M)$, acted upon by
$\GL(N)\times\GL(M)$. In fact, $\fX$ is the twisted cotangent bundle $T^*_\psi(\GL(N)/U_{M,N})$, and
$\fX^\vee=T^*\Hom(\BC^N,\BC^M)$. In this example, Conjecture \ref{bzsv} was proved in~\cite{bfgt} for $M=N-1$ and in~\cite{ty} in general.

\subsubsection{Bessel}\label{bessel}
First, we can take $\fX=T^*(\SO(2n+2))\circlearrowleft(\SO(2n+2)\times\SO(2n+1))$, and
$\fX^\vee=(\BC^{2n+2}_+\otimes\BC^{2n}_-)\circlearrowleft(\SO(2n+2)\times\Sp(2n))$.
Second, we can take $\fX=T^*(\SO(2n+1))\circlearrowleft(\SO(2n+1)\times\SO(2n))$, and
$\fX^\vee=(\BC^{2n}_+\otimes\BC^{2n}_-)\circlearrowleft(\SO(2n)\times\Sp(2n))$.
In these examples, Conjecture \ref{bzsv} was proved in~\cite{bft1}.

\subsubsection{Symplectic mirabolic}
The following is not formally an example to which the results of the present paper are directly applicable.
However, we hope that some upgrade of those results will also prove an analogue of Theorem \ref{main} in this case.

Let $\fX=(\BC^{2n}_-\times T^*\Sp(2n))\circlearrowleft(\Sp(2n)\times\Sp(2n))$, and
$\fX^\vee=(\BC^{2n+1}_+\otimes\BC^{2n}_-)\circlearrowleft(\SO(2n+1)\times\Sp(2n))$.
In this example, Conjecture \ref{bzsv} was proved in~\cite{bft2}. Note however that this example is not
polarizable (and also anomalous, so that the dual of the second copy of $\Sp(2n)$ is the {\em metaplectic}
dual).

\section{Compatibility with Hecke action}\label{hecke}

\subsection{The diagonal case}
\label{diagonal case}
Recall the setup of~\S\ref{idea of the proof} and consider the diagonal case where $\fY=G\circlearrowleft\bG=G\times G$, and
$\fX^\vee=T^*(G^\vee)\circlearrowleft\bG^\vee=G^\vee\times G^\vee$. We consider the Cartan torus $\bT^\vee=\sT\times\sT\subset\bG^\vee$
and its dominant integral weight $\blambda=(\lambda,-w_0\lambda)$. Using the Beilinson-Bernstein localization theorem for the functor
\[R\Gamma^{\widehat{\lambda,-w_0\lambda}}\colon \CalD((G^\vee/N^\vee)^2)\mod^{\widehat{\Lambda\times\Lambda}}\to
U_{\hbar=1}(\fg^\svee)\otimes U_{\hbar=1}(\fg^\svee)\mod^{\widehat{\lambda,-w_0\lambda}},\]
we identify the completion at $\blambda$ of $((Q_{\hbar=1}(\fX^\vee)\mod)^{\bG^\vee})^{\BZ/2}$ with $\CalD$-module category
$\big(((\CalD(G^\vee)\otimes\CalD(G^\vee/N^\vee)\otimes\CalD(G^\vee/N^\vee)\mod^{\widehat{\Lambda\times\Lambda}})^{G^{\vee},\on{strong}}\big)^{\BZ/2}$.

Note that
$\big((\CalD(G^\vee)\otimes\CalD(G^\vee/N^\vee)\otimes\CalD(G^\vee/N^\vee)\mod^{\widehat{\Lambda\times\Lambda}})^{G^{\vee},\on{strong}}\big)^{\BZ/2}$
is equivalent to $D(B^{\vee}\dashedbackslash \, G^{\vee}\dashedslash B^{\vee})^{\BZ/2}$.
Namely, $G^\vee\times G^\vee\times \sT^\vee\times\sT^\vee$ acts on $G^\vee\times(G^\vee/N^\vee)\times(G^\vee/N^\vee)$ as follows:
\((g_1,g_2,t_1,t_2)(g,[g'],[g''])=(g_1gg_2^{-1}, [g_1g't_1], [g_2g''t_2]),\) where $[g]$ denotes the class of $g$ in $G^\vee/N^\vee$.
If we restrict to $G^\vee\times[1]\times[1]$, we obtain $G^\vee$ with the following residual action of
$N^\vee\times N^\vee\times\sT^\vee\times\sT^\vee$:
\[(n_1,n_2,t_1,t_2)(g)=n_1t_1^{-1}gt_2n_2^{-1}.\]

On the other hand, if we restrict to $1\times(G^\vee/N^\vee)\times(G^\vee/N^\vee)$, we obtain an equivalence of
$\big((\CalD(G^\vee)\otimes\CalD(G^\vee/N^\vee)\otimes\CalD(G^\vee/N^\vee)\mod^{\widehat{\Lambda\times\Lambda}})^{G^{\vee},\on{strong}}\big)^{\BZ/2}$
with $(D\CM^{\widehat{\Lambda},\widehat{\Lambda}})^{\BZ/2}$: $G^\vee$-equivariant derived category of
$\CalD(G^\vee/N^\vee)^2$-modules such that the action of $\BC[\ft]$ (differential operators arising from the infinitesimal right action
of $\sT^\vee$ on the first (resp.\ second) copy of $G^\vee/N^\vee$)) has generalized eigenvalues in $\Lambda$.
The action of $G^\vee\times\sT^\vee\times\sT^\vee$ is as follows:
\[(g,t_1,t_2)([g_1],[g_2])=([gg_1t_1],[gg_2t_2]).\]

\subsection{Intertwining functors and localization theorem}\label{intertwining}
As we have seen in~\S\ref{diagonal case}, the category $D(B^{\vee}\dashedbackslash \, G^{\vee}\dashedslash B^{\vee})$ is equivalent to
the category $D\CM^{\widehat{\Lambda\times\Lambda}}$: $G^\vee$-equivariant derived category of
$\CalD((G^\vee/N^\vee)^2)$-modules such that the action of $\BC[\ft]$ (differential operators arising from the infinitesimal right action
of $\sT^\vee$ on the first (resp.\ second) copy of $G^\vee/N^\vee$)) has generalized eigenvalues in $\Lambda$. We have a monoidal equivalence
$R\Gamma^{\widehat{\lambda,-\lambda-2\rho}}\colon D\CM^{\widehat{\Lambda\times\Lambda}}\iso\HC(\fg^\svee)^{\widehat\lambda}$:
the derived category of Harish-Chandra bimodules over $U(\fg^\svee)$ with generalized central character $\lambda$ on both sides,
see~\cite[\S2.3]{bfo}. 
We denote by $\Psi_\lambda$ the composition
\[D(B\backslash G/B)^{\BZ/2}\iso D(B^{\vee}\dashedbackslash \, G^{\vee}\dashedslash B^{\vee})^{\BZ/2}\cong
(D\CM^{\widehat{\Lambda\times\Lambda}})^{\BZ/2}\iso (\HC(\fg^\vee)^{\widehat\lambda})^{\BZ/2}.\]

Note that the functor $R\Gamma^{\widehat{\lambda,-\lambda-2\rho}}$ is {\em not} a particular case of a functor in~\eqref{domi}.
In our present notation, the group considered in~\S\ref{idea of the proof} is rather $G^\vee\times G^\vee$, and
the weight considered in {\em loc.cit.}\ is a pair of weights $(\mu,\nu)$. But in~\eqref{domi} the weight was dominant, i.e.\ both
$\mu$ and $\nu$ were dominant, whereas in $R\Gamma^{\widehat{\lambda,-\lambda-2\rho}}$ we have $\mu=\lambda$, $\nu=-\lambda-2\rho$.
The functors of {\em loc.cit.}\ and of the present section are related by the following intertwining functor.

We have the long intertwining functor
$I_{w_0}^{-1}\colon D\CM^{\widehat{\Lambda\times\Lambda}}\iso D\CM^{\widehat{\Lambda\times\Lambda}}$
such that \[R\Gamma^{\widehat{\lambda,-\lambda-2\rho}}\circ I_{w_0}^{-1}\simeq R\Gamma^{\widehat{\lambda,-w_0\lambda}},\]
see~\cite[\S3.2]{bfo}. Namely, let ${\mathbb V}\subset(G^\vee/N^\vee)^4=\{(x_1,x_2,y_1,y_2) : x_1=y_1,\ (x_2,y_2)$ lies in the open
$G^\vee\times\sT^\vee$ orbit in $(G^\vee/N^\vee)^2\}$. Let $\pr_x,\pr_y\colon{\mathbb V}\to(G^\vee/N^\vee)$ send $(x_1,x_2,y_1,y_2)$ to
$(x_1,x_2)$, $(y_1,y_2)$ respectively. Then $I_{w_0}^{-1}=\pr_{y*}\pr_x^!$.

\subsection{A short digression on \cite{by}}
The purpose of this Section is to prove Theorem \ref{byun}. In order to do this we need a characterization of the equivalence between
$D(B\backslash G/B)^{\BZ/2}$ and  $D(B^{\vee}\dashedbackslash \, G^{\vee}\dashedslash B^{\vee})^{\BZ/2}$ coming from \cite{by}. First, we
identify the RHS with the ($\BZ/2$-version of) the derived category $(\HC(\fg^\svee)^{\widehat\lambda})^{\BZ/2}$ of Harish-Chandra bimodules over
$U(\fg^\svee)$ with generalized central character $\lambda$ on both sides, where $\lambda$ is a regular integral coweight of $\fg$.
Then we have the following
\begin{lem}\label{lem-byun}
\begin{enumerate}
\item
  The equivalence between $D(B\backslash G/B)^{\BZ/2}$ and $(\HC(\fg^\svee)^{\widehat\lambda})^{\BZ/2}$ coming from \cite{by}
  satisfies the following properties:

a) It is monoidal.

b) It is linear over $\Sym(\ft^*\oplus\ft^*)$.

c) It sends (the image in $D(B\backslash G/B)^{\BZ/2}$ of) every irreducible perverse sheaf in $D(B\backslash G/B)$ to
a projective Harish-Chandra bimodule.

\item The above equivalence is uniquely characterized by properties a), b) and~c).
\end{enumerate}
\end{lem}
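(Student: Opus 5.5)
Part (1) is a matter of recalling the construction in \cite{by} and checking it against the structures involved. Part (2) carries the real content. Properties a) and b) are built into the Bezrukavnikov--Yun construction. There, $D(B\backslash G/B)$ is realized via the tilting (or free-monodromic tilting) objects on the dual side. The monoidal structure is matched with convolution, and the $\Sym(\ft^*\oplus\ft^*)$-linearity comes from identifying equivariant parameters with logarithms of the left and right monodromy. Passing to $\BZ/2$-graded versions and to Harish-Chandra bimodules via the monoidal equivalence $R\Gamma^{\widehat{\lambda,-\lambda-2\rho}}$ of \S\ref{intertwining} preserves both properties.

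For c), I would use the fact that Koszul duality in \cite{by} exchanges simple perverse sheaves $\IC_w$ on $B\backslash G/B$ with indecomposable free-monodromic tilting objects $\wh T_w$ on the monodromic side. Under Beilinson--Bernstein localization at the antidominant-on-the-right weight $(\lambda,-\lambda-2\rho)$, free-monodromic tiltings correspond to projective objects of $\HC(\fg^\svee)^{\widehat\lambda}$. These are the bimodules $P\otimes$ (translation-type objects), or equivalently the projective covers in the category of Harish-Chandra bimodules with generalized central character: this is the standard Soergel/Bernstein--Gelfand description. So c) holds.

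For uniqueness, suppose $\Phi_1,\Phi_2$ both satisfy a)--c). Set $F=\Phi_2^{-1}\circ\Phi_1$, a monoidal $\Sym(\ft^*\oplus\ft^*)$-linear autoequivalence of $D(B\backslash G/B)^{\BZ/2}$ that preserves the additive subcategory $\mathcal P$ generated by (images of) irreducible perverse sheaves. Indeed, it sends projective bimodules to projective bimodules, and the projective indecomposables correspond bijectively to the $\IC_w$.

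Since $D(B\backslash G/B)^{\BZ/2}$ is generated by $\mathcal P$ (it is the $\BZ/2$-homotopy category of $\mathcal P$, just as $\HC^{\widehat\lambda}$ is generated by projectives), it suffices to show that $F|_{\mathcal P}$ is isomorphic to the identity as a monoidal functor. Two steps remain.

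\emph{Objects.} $F$ permutes indecomposables in $\mathcal P$ and preserves the monoidal unit $\IC_e$. Monoidality together with $\Sym(\ft^*\oplus\ft^*)$-linearity forces $F(\IC_s)\simeq\IC_s$ for simple reflections $s$. These are the only indecomposables $M\neq\IC_e$ whose endomorphism ring is $\Sym(\ft^*)\otimes_{\Sym(\ft^*)^s}\Sym(\ft^*)$ with that bimodule structure, and the bimodule structure distinguishes different $s$. Every $\IC_w$ is a summand of a Bott--Samelson product $\IC_{s_1}*\cdots*\IC_{s_k}$, characterized as the unique summand not occurring in shorter products. Hence $F(\IC_w)\simeq \IC_w$ for all $w$.

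\emph{Morphisms.} Via Soergel's Erweiterungssatz, $\mathcal P$ is equivalent to Soergel bimodules, by a functor $\mathbb V$ that is fully faithful. $\mathbb V$ is given by $\Hom(\IC_e\text{-like object},-)$, or by hypercohomology, and is constructed from the $\Sym(\ft^*\oplus\ft^*)$-linear structure alone. Therefore $F$ induces an autoequivalence of Soergel bimodules that is the identity on the underlying $\Sym(\ft^*)\otimes\Sym(\ft^*)$-bimodules, up to the monoidal coherence data.

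The main obstacle is this last step: showing that a monoidal, $\Sym(\ft^*\oplus\ft^*)$-linear autoequivalence fixing Bott--Samelson objects is monoidally isomorphic to the identity. I would first try to rigidify it on $\IC_s$. The automorphisms of $\IC_s$ commuting with the linear structure are scalars in degree zero. After fixing these scalars, monoidality determines $F$ on all Bott--Samelson objects, and fully faithfulness of $\mathbb V$ then pins down $F$ on morphisms. A residual ambiguity of a cocharacter-type twist (rescaling each $\IC_s$) should be absorbable into a monoidal natural isomorphism.
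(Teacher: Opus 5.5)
Your part (1) is the paper's argument: Bezrukavnikov--Yun sends $\IC_w$ to tilting objects, and tilting objects go to projective Harish-Chandra bimodules by \cite{bg}, \cite{bfo}. Your reduction of (2) to a $\Sym(\ft^*\oplus\ft^*)$-linear autoequivalence $F$ that permutes the $\IC_w$ also matches the paper. Your object step is correct but takes a detour. The paper notes that $\RHom(\IC_w,\IC_w)$, as a $\Sym(\ft^*)\otimes\Sym(\ft^*)$-module, is supported on the union of the graphs of all $w'\le w$. These supports are pairwise distinct, so b) alone gives $F(\IC_w)\simeq\IC_w$ for every $w$, with no need for monoidality or Bott--Samelson objects.

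The genuine gap is the morphism step, which you leave open (``main obstacle'', ``should be absorbable''). Your claim that $\mathbb V$ is ``constructed from the linear structure alone'' is exactly what has to be proved. Of your two descriptions of $\mathbb V$, the first is wrong: $\RHom(\IC_e,-)$ is a costalk at the closed orbit and is not fully faithful. The right one is hypercohomology, which is represented by the constant sheaf on $G$, i.e.\ $\mathbb V\simeq\RHom(\IC_{w_0},-)$ up to shift. With this, the missing argument is short:
\begin{enumerate}
\item $F(\IC_{w_0})\simeq\IC_{w_0}$ by the object step.
\item $\on{End}(\IC_{w_0})=H^\bullet_{B\times B}(G)\simeq\Sym(\ft^*)\otimes_{\Sym(\ft^*)^W}\Sym(\ft^*)$ is generated by the image of $\Sym(\ft^*\oplus\ft^*)$. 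So by b), conjugation by any chosen isomorphism $F(\IC_{w_0})\simeq\IC_{w_0}$ acts trivially on it.
\item Hence $F$ induces isomorphisms $\mathbb V(M)\simeq\mathbb V(F(M))$ of $\Sym(\ft^*\oplus\ft^*)$-modules, natural in $M\in\mathcal P$.
\item Full faithfulness of $\mathbb V$ on $\mathcal P$ (the equivariant Erweiterungssatz) lifts these to a natural isomorphism $F|_{\mathcal P}\simeq\Id$. One then extends to the whole category, which $\mathcal P$ generates at the dg level.
\end{enumerate}
This needs no rigidification of scalars on $\IC_s$, no monoidal coherence, and no analysis of ``cocharacter twists''; property a) is not used in this step at all. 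The paper itself does not spell this step out and disposes of it by citing \cite[\S2.5]{bbm}.
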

\begin{proof}
  Let us prove the 1st assertion. The properties a) and b) are immediate from \cite{by}. The Bezrukavnikov-Yun Koszul duality when
  viewed as an equivalence between $D(B\backslash G/B)^{\BZ/2}$ and $D(B^{\vee}\dashedbackslash \, G^{\vee}\dashedslash B^{\vee})^{\BZ/2}$
  sends the images of irreducible perverse sheaves to images of tilting objects from
  $D(B^{\vee}\dashedbackslash \, G^{\vee}\dashedslash B^{\vee})$. On the other hand, it is explained in~\cite{bg}
  (cf.\ also~\cite[\S2]{bfo}) that the natural monoidal equivalence between $D(B^{\vee}\dashedbackslash \, G^{\vee}\dashedslash B^{\vee})$
  and $\HC(\fg^\svee)^{\widehat\lambda}$ sends tilting objects to projective objects.

  Let us now prove the 2nd assertion. Assume that we are given an equivalence satisfying a), b) and c). Composing it with the inverse of the
  Bezrukavnikov-Yun equivalence, we get an auto-equivalence $F$ of the category $D(B\backslash G/B)^{\BZ/2}$ which satisfies properties~a)
  and~b), and in addition sends (the images of) irreducible pervese sheaves to (the images of)  semi-simple perverse sheaves and hence to
  (the images of) irreducible perverse sheaves.
We claim that such an auto-equivalence is isomorphic to the identity functor. First, every irreducible perverse sheaf
goes to itself (i.e.\ we can choose an isomorphism between $\calF$ and $F(\calF)$ for every simple perverse sheaf
$\calF$).\footnote{From now on we shall abuse the language by not writing ``image in  $D(B\backslash G/B)^{\BZ/2}$"
for objects of $D(B\backslash G/B)$.} Indeed, for every $w\in W$ consider $\text{RHom}_{B\times B}(\IC_w,\IC_w)$,
where $\IC_w$ denotes the corresponding irreducible perverse sheaf on $B\backslash G/B$. The above RHom is a bimodule
over $\Sym(\ft^*)$ which is supported on the union of graphs of all $w'$ where $w'\leq w$. Hence these supports are
different for different $w$. It follows that any $\Sym(\ft^*\oplus\ft^*)$-linear autoequivalence which permutes the
$\IC_w$'s, must send every $\IC_w$ to itself.

The rest of the proof is essentially a word-by-word repetition of the proof of~\cite[Proposition in \S2.5]{bbm}.
\end{proof}

\subsection{Proof of Theorem \ref{byun}} To prove~Theorem~\ref{byun} it is enough to prove that the equivalence $\Psi_{\lambda}$
of~\S\ref{intertwining} satisfies the conditions~a,b,c)
of~Lemma~\ref{lem-byun}. The conditions~a,b) are obvious, so we just need to prove the condition~c).
For that it is enough to construct a collection of objects of $D(B\backslash G/B)^{\BZ/2}$ such that:

\medskip
($i$) These objects go to images of projective objects in $(\HC(\fg^\svee)^{\widehat\lambda})^{\BZ/2}$ under $\Psi_{\lambda}$.

($ii$) These objects are direct sums of images of $\IC_w$'s and the image of every $\IC_w$ occurs as such a direct summand.

\medskip
\noindent
Recall that $D(B\backslash G/B)^{\BZ/2}$ is obtained from $(D(G(\CK))^{(G(\CO)\times G(\CO))\rtimes \BC^{\times}})^{\BZ/2}$
by applying the following procedures:

1) Completion at $a=(\lambda,-w_0\lambda,1)$.

2) Taking fiber over 1 with respect to the last coordinate.

3) Shift by $(\lambda,-w_0\lambda)$.

\noindent
Thus we have a natural functor $(D(G(\CK))^{(G(\CO)\times G(\CO))\rtimes \BC^{\times}})^{\BZ/2}\to D(B\backslash G/B)^{\BZ/2}$; this functor is given
by restriction to $t^{\lambda} G t^{-\lambda}$ and then applying functors~1),~2),~3) above (note that we know that $*$-restriction leads to the
same result as $!$-restriction after applying 1). Let us apply this functor to the images of perverse sheaves in
$(D(G(\CK))^{G(\CO)\times G(\CO)})^{\BZ/2}$. By the geometric Satake equivalence the category of such perverse sheaves is equivalent to
$\Rep(G^{\vee})$. If $V$ is a representation of $G^{\vee}$ then by definition if we apply $\Psi_{\lambda}$ to the image of $V$ in
$D(B\backslash G/B)^{\BZ/2}$ we get the following object of $(\HC(\fg^\svee)^{\widehat\lambda})^{\BZ/2}$: first we consider
$U(\fg^\svee)\otimes V$ with its natural Harish-Chandra bimodule structure and then we complete at $(\lambda,\lambda)$.
Such a bimodule is easily seen to be projective. This shows $(i)$.

Let us now show $(ii)$. By Corollary \ref{braden} we can use the hyperbolic restriction $\Phi$ to $t^{\lambda} G t^{-\lambda}$ as our functor
between the corresponding completions of $(D(G(\CK))^{G(\CO)\times G(\CO)})^{\BZ/2}$ and $D(B\backslash G/B)^{\BZ/2}$. But according to \cite{brad}
this functor maps semi-simple complexes to semi-semisimple ones. This implies the first part of~$(ii)$, and the 2nd part is obvious.


\subsection{Independence of $\lambda$}
\label{indep}
We conclude the paper by showing that the equivalence of Theorem \ref{main} is independent of the choice of the coweight $\lambda$. We give it here since the argument is similar to what is done in this Section.

For this it is enough to do the following. Let $\lambda$ and $\mu$ be two regular coweights of $G$ and let $\nu=\mu-\lambda$. It follows from the discussion at the end of~\S\ref{idea} that the completions of the category $((Q_{\hbar=1}(\fX^{\vee})\mod)^{G^{\vee}})^{\BZ/2}$ at $\lambda$ and at $\mu$ are canonically equivalent (up to the shift of the $\Sym(\grt)$-equivariant structure by $\nu$). We claim that this equivalence can be explicitly seen in the following way.  Let $V$ denote the irreducible finite-dimensional representaion of $G^{\vee}$ for which $\nu$ is an extremal weight and let $\HC(V)=U(\grg^{\vee})\otimes V$ be the corresponding Harish-Chandra bimodule. We have the functor $F_V$ from $((Q_{\hbar=1}(\fX^{\vee})\mod)^{G^{\vee}})^{\BZ/2}$ to itself given by tensor product with $\HC(V)$ over $U(\grg^{\vee})$ (which is the same as just tensor product with $V$ over $\BC$. It is easy to see that $F_V$ induces an equivalence of categories after taking completions at $\lambda$ and $\mu$ respectively, and moreover the resulting equivalence between the two completions at $\lambda$ and at $\mu$ is the one discussed above.

On the other hand, let us go to the LHS of the equivalence of Theorem \ref{main}. Let $\calS_V$ denote the object
of the category $D(G(\CK))^{(G(\CO)\times G(\CO))\rtimes \BC^{\times}}$ corresponding to $V$ under the geometric Satake
equivalence. Note that the functor $F_V$ corresponds to convolution with $\calS_V$ (followed by specialization of
$\hbar$ to 1). Let $i_{\lambda}\colon \fY\to\fY(\CK)$ be the embedding sending every $y$ to $t^{\lambda}y$.
Thus it follows from the above discussion that it is enough to show that the diagram
$$
\begin{CD}
D(\fX(\CK))^{G(\CO)\rtimes \BC^{\times}} @> \star \calS_V >> D(\fX(\CK))^{G(\CO)\rtimes \BC^{\times}} \\
@Vi_{\lambda}^* VV    @VV i_{\mu}^* V \\
D(\fY)^B @>\text{shift by $\nu$} >> D(\fY)^B
\end{CD}
$$

becomes commutative after

a) completion of the left column at $(\lambda,1)$,

b) completion of the right column at $(\mu,1)$,

c) replacing $\calS_V$ by its image in the completion of $D(G(\CK))^{(G(\CO)\times G(\CO))\rtimes \BC^{\times}}$ at $(-\lambda,\mu,1)$.

\noindent
However, we know that after the above completions the vertical arrows become equivalences and the upper horizontal arrow becomes just tensor product with the restriction of $\calS_V$ to the point $t^{\nu}$ which is one dimensional since $\nu$ was assumed to be an extremal weight of the irreducible representation $V$.

\subsection{Isomorphism of $W$-modules}\label{Lagrange}
Assume first that $\fX=T^*\fY$ where $\fY$ is a $G$-variety satisfying the condition that the stabilizer in $B$ of a general point is
connected. We have the natural map from the Grothendieck group of the category of {\em locally compact} objects in
$D(\fY)^B=D(\fY\times\calB)^G$ to $H(\widetilde{\Lambda})$ given by the characteristic cycle, and it follows from~Lemma~\ref{tim}
that this map is an isomorphism. The Grothendieck group of the category of {\em compact} objects in $D(\fY)^B=D(\fY\times\calB)^G$
is isomorphic to the dual space $H(\widetilde{\Lambda})^*$. The duality is given by the Euler characteristic of $R\Hom_{D(\fY)^B}(?,?)$.
Furthermore, the Grothendieck group of {\em compact} objects of $D(\fY)^{B_\mon}$
is isomorphic to $H(\widetilde{\Lambda})$. Hence if both $\fX$ and $\fX^{\vee}$ are of cotangent
type,~Conjecture~\ref{lagrangian} follows by~Corollary~\ref{hecke-mod}. This proves it for instance in the
examples of~\S\S\ref{js},\ref{mir} as well as in the example of~\S\ref{glmn} for $M=N-1$.

We can do something a bit more general.
Assume as before that $\fX=T^*_\psi \fY$ such that the $B$-stabilizer of a general point in $\fY$ is connected. Then we still
get an isomorphism between the Grothendieck group of the category of $\psi$-twisted $B$-equivariant $D$-modules on $\fY$ (which are the
same as $B\times (\GG_a,\chi))$-equivariant $D$-modules on $\psi$ where $\chi$ denotes the identity homomorphism $\GG_a\to \GG_a$)  --
we just need to say that the group generated by conormal bundles to the relevant $G\times \GG_a$-orbits in $T^*\psi\times T^*\calB$ maps
isomorphically onto the group generated by components of $\widetilde{\Lambda}$ under the map which sends every cycle to its intersection
with the preimage of 1 under the moment map with respect to $\GG_a$ followed by projection to $\fX\times T^*\calB$.
This proves~Conjecture~\ref{lagrangian} in the case of~\S\ref{glmn}.

In fact, we can relax the above requirements a little further (so called {\em pseudospherical} case).
Namely, it is enough to require that $\fX$ is as above, and $\fX^{\vee}=T^*_{\psi^{\vee}}\fY^{\vee}$ where $\fY^{\vee}$ is
just a $B^{\vee}$-variety and $\psi^{\vee}$ is a $B^{\vee}$-equivariant torsor on $\fY^{\vee}$, provided that

a) The quantization $Q_{\hbar=1}(\fX^\vee)$ is given by the quantum hamiltonian reduction of the algebra of differential operators on $\psi^{\vee}$ with respect to $\GG_a$.

b) The assertion of Lemma \ref{tim} holds in this case (note that we can not formally apply~Lemma~\ref{tim} here,
since $\fY^\vee$ in this case is not a $G^{\vee}$-variety).

These requirements are satisfied in the case of \S\ref{bessel}. Namely, recall that $G^\vee=\SO(2n+2)\times\Sp(2n)$
or $\SO(2n)\times\Sp(2n)$. We consider the second case, the first one being absolutely similar.
We choose a Lagrangian subspace $L\subset\BC^{2n}_-$ invariant with respect to the Borel
subgroup $B^\vee\subset G^\vee$. We set $L^*=\BC^{2n}_-/L$, and
$\fY^\vee=\BC^{2n}_+\otimes L^*$. Finally, $\psi^\vee$ is the descent of the Heisenberg extension
$0\to\BG_a\to\on{Heis}\to\BC^{2n}_+\otimes\BC^{2n}_-\to0$ to $\fY^\vee$.

We have a natural homomorphism (characteristic cycle) from
$K((D(\fY^\vee)^{B^\vee,\psi^\vee})^{\BZ/2}_{\on{loc.comp}})$ to $H(\Lambda^\vee)$ (from the Grothendieck group of the category of locally
compact objects to the top Borel-Moore homology).
According to~\cite[Remark~4.3.2]{fgt}, a basis in the latter group is
formed by the fundamental cycles of twisted conormal bundles of relevant $B^\vee$-orbits in $\fY^\vee$. It follows that the
above homomorphism of $K$-groups is surjective. However,
\begin{multline*}\dim H(\Lambda^\vee)=\sharp\on{Irr}\Lambda^\vee=\sharp\on{Irr}\Lambda=\dim H(\Lambda)
  =\dim K((D(\fY)^{B_{\on{mon}},\psi})^{\BZ/2}_{\on{comp}})\\ =\dim K((D(\fY^\vee)^{B^\vee,\psi^\vee})^{\BZ/2}_{\on{comp}})=
\dim K((D(\fY^\vee)^{B^\vee,\psi^\vee})^{\BZ/2}_{\on{loc.comp}}).\end{multline*}
Here the second equality is~\cite[Proposition~5.1.1]{fgt}, the fourth equality follows from~Lemma~\ref{tim}, the
fifth equality follows from~Theorem~\ref{main} combined with~\cite[Theorem~2.2.1]{bft1}, and the last equality follows from the
duality discussed in the beginning of this section.
We conclude that the surjective homomorphism $K((D(\fY^\vee)^{B^\vee,\psi^\vee})^{\BZ/2}_{\on{loc.comp}})\twoheadrightarrow H(\Lambda^\vee)$
must be an isomorphism. (In particular, the stabilizers in $B^\vee$ of points in relevant orbits in $\fY^\vee$ are all connected.)
All in all, we obtain an isomorphism of $W$-modules in~Conjecture~\ref{lagrangian}.

\end{document}